\newtheorem{definition}{Definition}
\newtheorem{remark}{Remark}
\newtheorem{theorem}{Theorem}
\newtheorem{corollary}{Corollary}
\newtheorem{lemma}{Lemma}
\newtheorem{assertion}{Assertion}
\newcommand{\eLie}{\mathrm{e}(3)}
\newcommand{\const}{\mathrm{const}}
\newcommand{\bR}{\textbf{R}}
\newcommand{\bS}{\textbf{S}}
\newcommand{\bv}{\textbf{v}}
\newcommand{\cI}{\mathcal{I}}
\newcommand{\bbR}{\mathbb{R}}
\DeclareMathOperator{\sgrad}{sgrad}
\let\phi\varphi
\let\hat\widehat
\author{I.\,K.~Kozlov\thanks{No Affiliation,  E-mail: {\tt ikozlov90@gmail.com}} \quad  and \quad  A.\,A.~Oshemkov\thanks{Faculty of Mechanics and Mathematics, Moscow State University,
Moscow, 119991 Russia,  E-mail: {\tt a@oshemkov.ru} }}
\title{Integrable systems with linear periodic integral \\ for the Lie algebra $\eLie$}
\date{}
\begin{document}

\maketitle

%\address{Lomonosov Moscow State University, Moscow, Russia}
%\email{ikozlov90@gmail.com, a@oshemkov.ru}

%\Received{\today}

\abstract{
Integrable systems with a linear periodic integral for the Lie algebra $\eLie$ are considered.
One investigates singulariries of the Liouville foliation, bifurcation diagram of the momentum mapping, transformations of Liouville tori,
topology of isoenergy surfaces and other topological properties of such systems.}
\date{}

%\notes{0}{
%\subclass{68T30, 68P20} % 2010 Mathematical Subject Classification
%\keywords{Integrable Hamiltonian system, periodic integral, bifurcation diagram, momentum mapping, Liouville tori}%
%\thank{This work was funded by the subsidy ....} }

\begin{center}
Keywords and phrases: \textit{Integrable Hamiltonian system, periodic integral, bifurcation diagram,
momentum mapping, Liouville tori}
\end{center}

%\thank{This work is partially supported by the grant of Russian Foundation for Basic Research No.\,16-01-00378 and the grant of the President 
%of the Russian Federation for State Support of Leading Scientific Schools No.\,7962.2016.1} }

%%%%%%%%%%%%%%%%%%%%%%%%%%%%%%%%%%%%%%%%%%%%%%%%%%%%%%%%%%%%%%%%%%%%%%%%%%%%%%%%%%%%%%%%%%%%%%%%%%%%%%%%%%%%%%%%%%%%%%%%%%%%%%%%%%%%%%%%%%%
\section{Introduction} \label{intro}
   
In this paper we study some topological properties of integrable Hamiltonian systems with an $S^1$-symmetry given by the Euler equations for the Lie 
algebra $\eLie$. Probably, the most well-known example of such a system is the classical Lagrange top. Roughly speaking, we consider a ``generalized'' 
Lagrange top which Hamiltonian has an arbitrary potential function and linear terms in momenta, but possesses the same $S^1$-symmetry.

We are interested in local and global topological properties of the Liouville foliation defined by the system under consideration, namely, 
the structure of bifurcation diagram and transformations of Liouville tori for critical values of the momentum mapping, non-degeneracy of equilibria 
and other singular points, the topology of isoenergy surfaces.

Note that there is a number of integrable systems with periodic linear integral which are well known in mechanics and mathematical physics,
which phase topology were studied by various authors. In particular, there are Lagrange and Kirchhoff integrable cases in rigid body dynamics 
(for the description of their topology see \cite{BF, BM, Osh91}),
the integrable case of Leggett equations describing dynamics of spin in the superfluid $^3$He (the bifurcation diagram and Fomenko invariants for this
system are described in \cite{krugl}), 
the integrable case of the motion of heavy ellipsoid on a smooth horizontal plane (topological invariants for this system were found in \cite{ivoch}).
  
Topological properties of all these systems are quite similar because of an $S^1$-symmetry which imposes strong restrictions on the structure of their 
singularities. Therefore, they can be studied under a uniform scheme. In this paper we perform such an investigation for an example of Hamiltonian 
possessing a periodic linear integral on~$\eLie^*$. Note that the problem of topological investigation of
integrable systems with S1-action is discussed in paper \cite{BIKO12}, which contains a list of various open
problems in the theory of integrable systems.

Apart from the systems on $\eLie^*$ considered in this paper there are other integrable systems with $S^1$-symmetry, which were also studied by 
various authors. For instance, natural mechanical systems on surfaces of revolution homeomorphic to the sphere were studied recently 
in \cite{kant} (see also \cite{BF}). Another example is the classical Euler case in the rigid body dynamics, where the $S^1$-action is 
given not by a linear, but by a quadratic integral. The results obtained in this paper show in particular that there are some differences 
between the topological properties of the systems under consideration and other cases with an $S^1$-symmetry (for example, the one investigated 
in \cite{kant} or the Euler case).

The article is organized as follows. In Section 2 we describe the systems under consideration. We start the analysis with the study of non-deneracy and types
of singular points of rank 0 in Section 3 (Corollary~\ref{Cor:Rank0Points} ). In Section 4 we find singular points of rank 1 
(Theorem \ref{Th:CriticalCircle}) and describe the bifurcation diagrams of the system (Theorems \ref{Th:BifDiag0} and~\ref{Th:BifDiag}). 
In Section 5 we determine types of non-degenerate points of rank~1 (Theorem~\ref{Th:Rank1Type}) and specify the corresponding Liouville tori bifurcations 
(Theorem~\ref{Th:Atoms}). Finally, in Section 6 we list all possible isoenergy surfaces for the system (Theorem~\ref{Th:Q3}).
%%%%%%%%%%%%%%%%%%%%%%%%%%%%%%%%%%%%%%%%%%%%%%%%%%%%%%%%%%%%%%%%%%%%%%%%%%%%%%%%%%%%%%%%%%%%%%%%%%%%%%%%%%%%%%%%%%%%%%%%%%%%%%%%%%%%%%%%%%%
\section{Description of the system} \label{system}
   
Let us recall that the Lie--Poisson bracket for the Lie algebra $\eLie$ is given by the formulas
\begin{equation} \label{Eq:E3PoissBracket} 
\{S_i, S_j\} = \varepsilon_{ijk}S_k, \quad \{S_i, R_j\} = \varepsilon_{ijk}R_k, \quad \{R_i, R_j\} = 0,
\end{equation}  
where  $S_1,S_2,S_3,R_1,R_2,R_3$ are linear coordinates on the dual space $\eLie^*$ for the Lie algebra~$\eLie$. 
We will use the notation $\bS=(S_1,S_2,S_3)$ and $\bR=(R_1,R_2,R_3)$ and also $\langle{\cdot}{,}{\cdot}\rangle$ and ${\,}\times{\,}$
for the scalar and vector product of 3-dimensional vectors.

A Hamiltonian system with Hamiltonian $H$ is given by the Euler equations 
\begin{equation*}
\dot{x}_i = \{ x_i, H\},
\end{equation*}
which for the Lie algebra $\eLie$ take the form
\begin{equation*}
\dot{\bS}=\frac{\partial H}{\partial\bS}\times\bS+\frac{\partial H}{\partial\bR}\times\bR,\qquad 
\dot{\bR}=\frac{\partial H}{\partial\bS}\times\bR.
\end{equation*}

Bracket \eqref{Eq:E3PoissBracket} has two Casimir functions:  
\[F_1 =\langle\bR,\bR\rangle, \qquad F_2 =\langle\bS,\bR\rangle.\]
Their regular common level surfaces 
\begin{equation} \label{Eq:LevelSurf} 
M^4_{a,g} = \{ (\bS, \bR)\mid\,  F_1 (\bS,\bR) = a,\, F_2 (\bS, \bR) = g, \}, \qquad a>0,
\end{equation} 
are the sympectic leaves of bracket \eqref{Eq:E3PoissBracket} and are the orbits of the coadjoint repsresentation for the Lie algebra $\eLie$. We are interested in integrable Hamiltonian systems on the orbits $M^4_{a,g}$ for which some linear function on $\eLie^*$ is a first integral
defining an $S^1$-action. 

Let us describe several examples of such systems from mechanics and mathematical physics, which are integrable cases of the Euler equations 
for the Lie algebra $\eLie$ with Hamiltonian $H$ and integral $K$ 
(an explanation of physical sense for parameters and variables of these systems can be found in \cite{BF, BM, krugl, ivoch}).

1) The Lagrange case. 
This is a symmetric top with two equal moments of inertia which center of gravity lies on the symmetry axis:
$$
H=\frac{S_1^2}A+\frac{S_2^2}A+\frac{S_3^2}B-pR_3,\quad K=S_3,\quad\text{where }A,B,p=\const.
$$

2) The Kirchhoff case.
This system describes the motion of a dynamically symmetric rigid body in an ideal fluid:
$$
\begin{aligned}
H&=AS_1^2+AS_2^2+aS_3^2+2(BS_1R_1+2BS_2R_2+bS_3R_3)+\\&+CR_1^2+CR_2^2+cR_3^2,\quad K=S_3,\quad\text{where } A,a,B,b,C,c=\const.
\end{aligned}
$$

3) The following integrable case for the Leggett system describing the dynamics of spin in the superfluid $^3$He:
$$
H=S_1^2+S_2^2+S_3^2-\gamma S_3-R_3^2,\quad K=S_3,\quad\text{where }\gamma=\const.
$$

4) Integrable system describing the motion of a dynamically and geometrically symmetric heavy ellipsoid on a smooth horizontal plane:
$$
\begin{aligned}
H&=\frac{S_1^2+S_2^2+A(S_1R_1+S_2R_2)^2}{2b(1+A(R_1^2+R_2^2))}+\frac{S_3^2}{2J}+\sqrt{1+cR_3^2}+sR_3,\\
K&=S_3\quad\text{where }A=\frac{cR_3^2}{1+cR_3^2},\quad b,c,J,s=\const.
\end{aligned}
$$

In all these examples the additional integral is the function $S_3$ on $\eLie^*$. 
Let us explain that this is a general case if we require that the integral is linear and periodic.

\begin{assertion}
Let $K$ be a linear functions on $\eLie^*$ which Hamiltonian flow $\sgrad K$ defined by bracket {\rm\eqref{Eq:E3PoissBracket}}
is periodic. Then there is a linear change of variables preserving the bracket {\rm\eqref{Eq:E3PoissBracket}} taking the function $K$
to $cS_3$, where $c$ is some constant.
\end{assertion}

\begin{proof} 
Let $K=\alpha_1S_1+\alpha_2S_2+\alpha_3S_3+\beta_1R_1+\beta_2R_2+\beta_3R_3$. 
For an arbitrary orthogonal matrix $A$ the transformation $\Phi_A:(\bS,\bR)\to(A\bS,A\bR)$ preserves bracket (\ref{Eq:E3PoissBracket}).
If $\alpha_1=\alpha_2=\alpha_3=0$, then we can choose a matrix $A$ such that $\Phi_A$ takes the function $K$ to $\lambda R_3$, where $\lambda=\const$.
It is clear that the Hamiltonian flow of the function $\lambda R_3$ is not periodic, since the trajectories of the field
$\sgrad R_3=(-R_2,R_1,0,0,0,0)$ are straight lines in $\eLie^*$.

If there are non-zero $\alpha_i$, then applying an appropriate transformation $\Phi_A$ we can transform $K$ to a function
of the form $cS_3+\beta'_1R_1+\beta'_2R_2+\beta'_3R_3$.
It is easy to check that for any vector $\bv$ the transformations $\Psi_{\bv}:(\bS,\bR)\to(\bS+\bv\times\bR,\bR)$ also preserve 
bracket (\ref{Eq:E3PoissBracket}). This allows one to transform the function $K$ to the form $cS_3+\lambda R_3$, where $c\ne0$.

Now consider the function $K=S_3+\lambda R_3$ and determine for which $\lambda$ the Hamiltonian flow of $K$ is periodic.
Integral trajectories for the field $\sgrad K=(-S_2-\lambda R_2,S_1+\lambda R_1,0,-R_2,R_1,0)$ can be explicitly written:
$$
\begin{aligned}
\gamma(t)=
(&(s_1{-}\lambda r_2t)\cos t{-}(s_2{+}\lambda r_1t)\sin t,\,(s_2{+}\lambda r_1t)\cos t{+}(s_1{-}\lambda r_2t)\sin t,\\
 &s_3,\,r_1\cos t-r_2\sin t,\,r_2\cos t+r_1\sin t,\,r_3),
\end{aligned}
$$
where $s_1,s_2,s_3,r_1,r_2,r_3$ are constants. It is clear from this formula that the trajectories are periodic only for $\lambda=0$.
\end{proof} 

\begin{remark} 
It is well known that an action of any compact group can be linearized at a fixed point and that for an action of the circle
$S^1$ the corresponding tangent space can be represented as a sum of invariant two-dimensional subspaces. Thus among all linear functions on $\eLie^*$ 
the periodic integrals are distiguished by the property that their linearization at any singular point is a unitary operator with respect to a 
complex structure on the tangent space. It also follows that up to the choice of the coordinate system and multipltication by a constant any periodic 
linear integral on~$\eLie^*$ is~$S_3$.                                    
\end{remark} 

Further we will consider Hamiltonian systems for the Lie algebra $\eLie$ which possess the first integral $K=S_3$ and which Hamiltonian $H$ 
is quadratic in $S$, i.e., 
\begin{equation}  \label{Eq:HamGen} 
H=A_1S_1^2+A_2S_2^2+A_3S_3^2+f_1(\bR)S_1+f_2(\bR)S_2+f_3(\bR)S_3+f_4(\bR),  
\end{equation}
where $A_1,A_2,A_3$ are arbitrary positive constants and $f_1,f_2,f_3,f_4$ are smooth functions of $R_1,R_2,R_3$.

First of all, let us rewrite Hamiltonian \eqref{Eq:HamGen} in a more convient way using its commutativity with the function $S_3$.

\begin{assertion} 
Up to multiplication by a constant any Hamiltonian of the form \eqref{Eq:HamGen} commuting with the function $K=S_3$  
has the form 
\begin{equation} \label{Eq:H}  
\begin{aligned}  
H&=\frac12\Bigl(S_1^2+S_2^2+\frac{S_3^2}\beta\Bigr)+g_1(\bR^2,R_3)(S_1R_2-S_2R_1)+\\  
 &+g_2(\bR^2,R_3)\langle\bS,\bR\rangle+g_3(\bR^2,R_3)S_3+V(\bR^2, R_3), 
\end{aligned} 
\end{equation}   
where $\beta>0$ and the functions $g_1,g_2,g_3,V$ depend only on $\bR^2$ and $R_3$ and are smooth if $\bR^2\ne0$.
\end{assertion}

\begin{proof} 
The Hamiltonian vector field for the function $K$ is equal to 
$$
\sgrad K=-R_2\frac\partial{\partial R_1}+R_1\frac\partial{\partial R_2}-S_2\frac\partial{\partial S_1}+S_1\frac\partial{\partial S_2}.
$$

Since $\{H,K\}=(\sgrad K)H=0$, we get 
$$
\begin{gathered}
(\sgrad K)H=2(A_2-A_1)S_1S_2+\\
+\Bigl({-}R_2\frac{\partial f_1}{\partial R_1}{+}R_1\frac{\partial f_1}{\partial R_2}{+}f_2(\bR)\Bigr)S_1+ 
 \Bigl({-}R_2\frac{\partial f_2}{\partial R_1}{+}R_1\frac{\partial f_2}{\partial R_2}{-}f_1(\bR)\Bigr)S_2+\\
+\Bigl(-R_2\frac{\partial f_3}{\partial R_1}+R_1\frac{\partial f_3}{\partial R_2}\Bigr)S_3+
 \Bigl(-R_2\frac{\partial f_4}{\partial R_1}+R_1\frac{\partial f_4}{\partial R_2}\Bigr)=0.
\end{gathered}
$$
Hence, $A_1=A_2$ (multiplying by a constant we can make both these constants equal to $\frac12$) and the four expressions in the brackets are equal to zero.

In polar coordinates $(\rho, \varphi) $ on the plane $(R_1, R_2)$ the vector field $\frac{\partial}{\partial \varphi}$ is exactly
$ -R_2\frac{\partial }{\partial R_1}+ R_1\frac{\partial }{\partial R_2}$.
Therefore, 
$$
\frac{\partial f_3}{\partial \varphi} =0,\quad\frac{\partial f_4}{\partial \varphi} =0,\qquad
\frac{\partial f_1}{\partial \varphi} =-f_2,\quad\frac{\partial f_2}{\partial \varphi} =f_1.
$$
The first two of these equations imply that $f_3$ and $f_4$ depend only on $\rho$ and $R_3$ or, equivalently, 
$f_3(\bR) = g_3(\bR^2, R_3)$ and $f_4(\bR) = V(\bR^2, R_3)$. 
The latter two equations can be cosidered as a system of ODE with parameters $\rho$ and $R_3$.
Solving it, we obtain
$$
\begin{aligned} 
f_1 &= f_{11}(\rho,R_3)\cos\varphi + f_{12}(\rho,R_3)\sin\varphi=\frac{ f_{11}(\rho,R_3)}\rho R_1+\frac{f_{12}(\rho,R_3)}\rho R_2,\\
f_2 &=-f_{12}(\rho,R_3)\cos\varphi{+}f_{11}(\rho,R_3)\sin\varphi=\frac{-f_{12}(\rho,R_3)}\rho R_1{+}\frac{f_{11}(\rho,R_3)}\rho R_2.
\end{aligned}
$$
Since $\rho=\sqrt{R_1^2+R_2^2}$ we get the desired form for the Hamiltonian~$H$.
\end{proof}

%%%%%%%%%%%%%%%%%%%%%%%%%%%%%%%%%%%%%%%%%%%%%%%%%%%%%%%%%%%%%%%%%%%%%%%%%%%%%%%%%%%%%%%%%%%%%%%%%%%%%%%%%%%%%%%%%%%%%%%%%%%%%%%%%%%%%%%%%%%
\section{Singularities of rank 0} \label{S:CritPointsRank0} 

It turns out that equilibria points for a Hamiltonian system on $\eLie^*$ possessing a linear periodic integral $K$ are exactly 
the points where $\sgrad K=0$. This gives the following simple description for singularities of rank 0 of such integrable Hamiltonian systems
(not necessarily with Hamiltonian of the form \eqref{Eq:HamGen}).

\begin{theorem} \label{A:Rank0Points} 
The set of singular points of rank $0$ for an integrable Hamiltonian system on $\eLie^*$ with arbitrary Hamiltonian $H$ possessing 
the integral $K=S_3$ is the two-dimensional subspace 
\begin{equation} \label{Eq:Rank0Points} 
\{(0,0,S_3,0,0,R_3)\} 
\end{equation} 
in $\eLie^*$. In particular, for each orbit $M^4_{a,g}$ there are precisely two singular points of rank $0${\rm:} 
$$
\Bigl(0,0,\pm\frac g{\sqrt a},0,0,\pm\sqrt a\Bigr).
$$
\end{theorem}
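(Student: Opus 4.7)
The plan is to characterize rank-$0$ singular points as the common zeros of $\sgrad H$ and $\sgrad K$ on the symplectic leaf, and then to use the $S^1$-invariance of $H$ to show that every zero of $\sgrad K$ is automatically a zero of $\sgrad H$.

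\textbf{First,} I would recall the computation of $\sgrad K$ for $K = S_3$ from the preceding proof:
$$\sgrad K = -R_2\frac{\partial}{\partial R_1}+R_1\frac{\partial}{\partial R_2}-S_2\frac{\partial}{\partial S_1}+S_1\frac{\partial}{\partial S_2}.$$
This vanishes exactly on the two-dimensional subspace \eqref{Eq:Rank0Points}, which immediately gives the inclusion $\{\sgrad H = \sgrad K = 0\}\subset\{S_1=S_2=R_1=R_2=0\}$.

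\textbf{Next,} I would establish the reverse inclusion via the $S^1$-symmetry. Since $\{H,K\} = 0$ and $\sgrad K$ generates an $S^1$-action (by the first Assertion), $H$ is invariant under the action that simultaneously rotates the pairs $(S_1,S_2)$ and $(R_1,R_2)$ and fixes $S_3, R_3$. Any point of \eqref{Eq:Rank0Points} is a fixed point of this action, so the differential $dH$ there must be invariant under the induced linear representation on the tangent space; since the only invariant linear functionals are combinations of $dS_3$ and $dR_3$, this forces
$$\frac{\partial H}{\partial S_1}=\frac{\partial H}{\partial S_2}=\frac{\partial H}{\partial R_1}=\frac{\partial H}{\partial R_2}=0$$
at that point. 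Substituting these equalities together with $\bS=(0,0,S_3)$ and $\bR=(0,0,R_3)$ into the Euler equations from Section~\ref{system}, all cross products involve vectors parallel to the $z$-axis and therefore vanish; hence $\sgrad H = 0$ as well, and the point has rank~$0$.

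\textbf{Finally,} to count points on $M^4_{a,g}$ I would intersect the subspace \eqref{Eq:Rank0Points} with the constraints $F_1 = a$ and $F_2 = g$. Using $R_1=R_2=0$, they become $R_3^2 = a$ and $S_3 R_3 = g$, yielding precisely the two points $(0,0,\pm g/\sqrt a,0,0,\pm\sqrt a)$ listed in the statement.

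\textbf{Main obstacle.} The only non-computational point is the use of $S^1$-invariance in the middle step, but this is immediate from the standard fact that an invariant linear functional on a non-trivial irreducible $S^1$-representation must vanish (this is also the content of the Remark after the first Assertion). I expect no serious obstacle.
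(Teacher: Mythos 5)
Your proof is correct and follows essentially the same route as the paper: both identify the zeros of $\sgrad K$ as the subspace \eqref{Eq:Rank0Points} and then use $\{H,K\}=0$ to force $\frac{\partial H}{\partial S_1}=\frac{\partial H}{\partial S_2}=\frac{\partial H}{\partial R_1}=\frac{\partial H}{\partial R_2}=0$ there, concluding via the explicit form of $\sgrad H$ at such points. The only (cosmetic) difference is that you derive the vanishing of these derivatives by averaging $dH$ over the global $S^1$-action at a fixed point, whereas the paper differentiates the identity $d_yH(\sgrad_yK)=0$ to obtain $A_K^*(d_yH)=0$ and reads the same conclusion off the matrix of the linearization $A_K$.
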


\begin{proof} 
The Hamiltonian vector field of a function $f$ on $\eLie^*$ has the form 
\begin{equation} \label{sgrad}
\sgrad f=\Bigl(\frac{\partial f}{\partial\bS}\times\bS+\frac{\partial f}{\partial\bR}\times\bR,\frac{\partial f}{\partial\bS}\times\bR\Bigr),
\end{equation}
and for the function $K=S_3$ we have $\sgrad K=(-S_2,S_1,0,-R_2,R_1,0)$. Therefore, $\sgrad K=0$ exactly at points \eqref{Eq:Rank0Points}. 
Thus, points other than \eqref{Eq:Rank0Points} can not be singular points of rank $0$. 

Let us prove that $\sgrad H$ vanishes at points \eqref{Eq:Rank0Points}.
The functions $H$ and $K$ commute with respect to bracket \eqref{Eq:E3PoissBracket}, i.e.,
$d_yH(\sgrad_yK)=0$ for any point $y\in\eLie^*$ (the index $y$ in $d_yf$ or $\sgrad_yf$ denotes the point at which 
the differential or, respectively, skew-gradient of the function $f$ is taken). 
Taking the differential of the function $d_yH(\sgrad_yK)$ at any point $y=(0,0,S_3,0,0,R_3)$, we get 
\begin{equation} \label{LKdH}
A_K^*(d_yH)=0, 
\end{equation}
where $A_K$ is the linearization operator for the vector field $\sgrad K$ at the point $y$, since $\sgrad_yK=0$.
The matrix of the operator $A_K:\eLie^*\to\eLie^*$ has the form
\[\begin{pmatrix}
0&-1&0&0& 0&0\\
1& 0&0&0& 0&0\\
0& 0&0&0& 0&0\\
0& 0&0&0&-1&0\\
0& 0&0&1& 0&0\\
0& 0&0&0& 0&0
\end{pmatrix}\] 
and therefore condition \eqref{LKdH} implies that 
$\frac{\partial H}{\partial S_1}=\frac{\partial H}{\partial S_2}=\frac{\partial H}{\partial R_1}=\frac{\partial H}{\partial R_2}=0$
at any point $y=(0,0,S_3,0,0,R_3)$. 
Hence $\sgrad H$ vanishes at points \eqref{Eq:Rank0Points}, since at a point $y=(0,0,S_3,0,0,R_3)$  formula \eqref{sgrad} becomes
$$
\sgrad_yf=\Bigl(S_3\frac{\partial f}{\partial S_2}+R_3\frac{\partial f}{\partial R_2},-S_3\frac{\partial f}{\partial S_1}-R_3\frac{\partial f}{\partial R_1},
0,R_3\frac{\partial f}{\partial S_2},-R_3\frac{\partial f}{\partial S_1},0\Bigr).
$$

Theorem \ref{A:Rank0Points} is proved. 
\end{proof}

Now, let us state when these zero-rank points are non-degenerate and determine their type
(for more information about non-degeneracy of singular points of a momentum mapping see \cite{BF}).

\begin{theorem} \label{Th:Rank0Points} 
For an integrable Hamiltonian system on $\eLie^*$ with arbitrary Hamiltonian $H$ possessing the integral $K=S_3$, the singular point of rank $0$ 
$$
P_\pm=\Bigl(0,0,\pm\frac g{\sqrt a},0,0,\pm\sqrt a\Bigr)
$$
on the orbit $M^4_{a,g}$ is non-degenerate iff $q\ne0$, where  
\begin{align} 
\label{Eq:SpectrumQ}  
q&=p^2+R_3^2(H_{11}H_{22}-|H_{12}|^2),\\
\label{Eq:SpectrumP}  
p&=\frac g{2R_3}\frac{\partial^2H}{\partial S_1^2}+R_3\frac{\partial^2 H}{\partial S_1\partial R_1}-\frac{\partial H}{\partial S_3}, 
\end{align} 
and 
\begin{equation*} 
\begin{gathered} 
H_{11}=\frac{\partial^2 H}{\partial S_1^2},\qquad 
H_{12}=\Bigl(\frac{\partial^2 H}{\partial S_1\partial R_1}-\frac1{R_3}\frac{\partial H}{\partial S_3}\Bigr)+i\frac{\partial^2 H}{\partial S_2\partial R_1},\\
H_{22}=\frac{\partial^2 H}{\partial R_1^2}+\frac g{R_3^3}\frac{\partial H}{\partial S_3}-\frac1{R_3}\frac{\partial H}{\partial R_3}.
\end{gathered} 
\end{equation*} 

Also, if the point $P_\pm$ is non-degenerate, then its type is
\begin{enumerate}
\item center-center if $q>0$,
\item focus-focus if $q<0$.
\end{enumerate}
\end{theorem}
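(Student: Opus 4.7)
The plan is to reduce the claim to a standard non-degeneracy criterion for a rank-$0$ singular point on a four-dimensional symplectic leaf, and then carry out the resulting explicit computation.

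First, I would fix coordinates on $M^4_{a,g}$ near $P_\pm$. Since $R_3 \neq 0$ at $P_\pm$, the Casimir relations $\langle\bR,\bR\rangle = a$ and $\langle\bS,\bR\rangle = g$ may be solved for $R_3$ and $S_3$, so $(S_1, S_2, R_1, R_2)$ serve as coordinates vanishing at $P_\pm$. I would compute the induced symplectic matrix $\Omega$ at $P_\pm$ and form the Hamiltonian linearizations $A_H = \Omega^{-1} d^2 H$ and $A_K = \Omega^{-1} d^2 K$ on $T_{P_\pm} M^4_{a,g}$; they commute because $\{H,K\} = 0$. The operator $A_K$ is independent of $H$ and, as the generator of an $S^1$-action with an isolated fixed point at $P_\pm$, is semisimple with eigenvalues $\pm i$ of multiplicity two, thus providing a complex structure $J$ on $T_{P_\pm}M^4_{a,g} \cong \mathbb{C}^2$. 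Since $[A_H, A_K] = 0$, the operator $A_H$ is $\mathbb{C}$-linear with respect to $J$ and is represented by a $2 \times 2$ complex matrix~$M$.

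Next I would invoke the standard classification (see~\cite{BF}). If $V_\pm$ denote the $\pm i$-eigenspaces of $A_K$ on the complexified tangent space, then $V_- = \overline{V_+}$, so the spectrum of $A_H$ is $\{\mu_1, \mu_2, \bar\mu_1, \bar\mu_2\}$, where $\mu_1, \mu_2$ are the eigenvalues of $M$. The Hamiltonian symmetry forces this multiset to be invariant under negation, leaving only two non-degenerate options: $\mu_1, \mu_2$ both purely imaginary (giving the center-center spectrum $\pm i\alpha, \pm i\beta$), or $\mu_2 = -\bar\mu_1$ (giving the focus-focus spectrum $\pm a \pm bi$). Non-degeneracy of $P_\pm$ amounts to $\mu_1 \neq \mu_2$, equivalently $D := (\mathrm{tr}\, M)^2 - 4 \det M \neq 0$, and an elementary check shows that $D$ is real, negative in the first case and positive in the second.

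It then remains to identify $D$ with $q$ up to a real non-zero factor. To do this I would compute $d^2 H$ in the ambient coordinates and restrict to $T_{P_\pm}M^4_{a,g}$ using the chain rule coming from the Casimirs; the corrections $g/R_3$ and $1/R_3$ appearing in the definitions of $H_{12}$ and $H_{22}$ are precisely the contributions from differentiating $S_3 = (g - S_1R_1 - S_2R_2)/R_3$ and $R_3 = \pm\sqrt{a - R_1^2 - R_2^2}$. After reading off the entries of $M$ in a basis diagonalizing $J$, $\mathrm{tr}\, M$ should come out proportional to $ip$ and $\det M$ should combine the remaining terms into $H_{11}H_{22} - |H_{12}|^2$, yielding $D$ proportional to $-q$ with a positive factor; combined with the previous step this gives both the non-degeneracy criterion $q \neq 0$ and the correct sign convention for the type. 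The main obstacle is precisely this explicit calculation, namely restricting the ambient Hessian to the leaf in the correct complex basis so that the somewhat asymmetric definitions of $H_{11}$, $H_{12}$, $H_{22}$, and $p$ emerge in exactly the stated form.
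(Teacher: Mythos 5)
Your proposal is correct and follows essentially the same route as the paper: the paper also restricts to the coordinates $(S_1,S_2,R_1,R_2)$ on the leaf, uses the fact that $A_{\hat K}$ defines a complex structure commuting with $A_{\hat H}$ to complexify the Poisson matrix and the Hessian into $2\times2$ complex matrices, and reads off the spectrum $\{\pm i(p+\sqrt q),\pm i(p-\sqrt q)\}$, so that your discriminant $D=(\mathrm{tr}\,M)^2-4\det M$ is exactly $-4q$. The only detail worth noting is that, as in the paper, when $q\ne0$ but $p^2=q$ one must pass to a different linear combination $\lambda A_{\hat H}+\mu A_{\hat K}$ to exhibit four distinct non-zero eigenvalues, which your criterion ``$\mu_1\ne\mu_2$'' implicitly handles.
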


Theorem \ref{Th:Rank0Points} holds for any Hamiltonian $H$ that commutes (and is functionally independent) with $K=S_3$. 
For the Hamiltonian $H$ quadratic in $\bS$ the condition of non-degeneracy and types of singular points of rank~0 are as follows.

\begin{corollary} \label{Cor:Rank0Points} 
For Hamiltonian $\eqref{Eq:H}$ the type of singular points of rank~$0$ is completely determined as in Theorem~{\rm\ref{Th:Rank0Points}} by 
$$
q=\frac{g^2}{4R_3^2}-R_3^2g_1^2(a,R_3)+gR_3\frac{\partial g_2}{\partial R_3}(a,R_3)-g\frac{\partial g_3}{\partial R_3}(a,R_3)-
R_3\frac{\partial V}{\partial R_3}(a,R_3).
$$
\end{corollary}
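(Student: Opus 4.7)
The plan is to substitute Hamiltonian \eqref{Eq:H} into the formulas \eqref{Eq:SpectrumQ}--\eqref{Eq:SpectrumP} from Theorem~\ref{Th:Rank0Points} and to simplify at the singular points $P_\pm$, exploiting the substantial cancellation that arises there. The crucial simplifying facts at $P_\pm$ are $S_1=S_2=R_1=R_2=0$, $\bR^2=R_3^2=a$, and $S_3R_3=g$ (so $S_3=g/R_3$). Since $g_1,g_2,g_3,V$ are functions of $\bR^2=R_1^2+R_2^2+R_3^2$ and $R_3$, every partial of a $g_i$ with respect to $R_1$ or $R_2$ carries a factor $R_1$ or $R_2$ and vanishes at $P_\pm$, as do all summands proportional to $S_1$, $S_2$ or to $S_1R_2-S_2R_1$.

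I would first record the $S$-derivatives at $P_\pm$: $\partial^2H/\partial S_1^2=1$, $\partial^2H/\partial S_1\partial R_1=g_2(a,R_3)$, $\partial^2H/\partial S_2\partial R_1=-g_1(a,R_3)$, and $\partial H/\partial S_3=g/(\beta R_3)+g_2R_3+g_3$. Substituting into \eqref{Eq:SpectrumP} the $g_2R_3$ contributions cancel, giving $p=g/(2R_3)-g/(\beta R_3)-g_3$; the same cancellation in $H_{12}$ yields $H_{12}=-g/(\beta R_3^2)-g_3/R_3-ig_1$, so that $R_3^2|H_{12}|^2$ contains only $\beta$-, $g_3$- and $g_1^2$-terms.

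For $H_{22}$ I would use the chain rule $\partial g_i/\partial R_3=\partial_v g_i+2R_3\,\partial_u g_i$ (with $\partial_u,\partial_v$ denoting derivatives in the first and second arguments) to obtain $\partial^2 H/\partial R_1^2=2g\,\partial_u g_2+(2g/R_3)\,\partial_u g_3+2\,\partial_u V$ and $\partial H/\partial R_3=g\,\partial_v g_2+2gR_3\,\partial_u g_2+g g_2/R_3+g\,\partial_v g_3/R_3+2g\,\partial_u g_3+\partial_v V+2R_3\,\partial_u V$. Assembling $R_3^2H_{22}$ from the three summands of Theorem~\ref{Th:Rank0Points}, all $\partial_u g_2,\partial_u g_3,\partial_u V$ contributions cancel between $R_3^2\,\partial^2 H/\partial R_1^2$ and $-R_3\,\partial H/\partial R_3$, and the $gg_2$ pieces from $(g/R_3)\,\partial H/\partial S_3$ and $-R_3\,\partial H/\partial R_3$ cancel as well. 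Forming $q=p^2+R_3^2(H_{11}H_{22}-|H_{12}|^2)$ and expanding $p^2$, every $\beta$-dependent term and every $g_3^2$ and $gg_3/R_3$ cross term cancels pairwise among $p^2$, $R_3^2H_{22}$ and $R_3^2|H_{12}|^2$, leaving exactly the expression claimed in the corollary; the statement on the type of $P_\pm$ then follows from Theorem~\ref{Th:Rank0Points} by reading off the sign of $q$. The main obstacle is purely bookkeeping: tracking the three cancellation groups (the $\beta$-chain, the $\partial_u$ chain-rule remainders, and the mixed $g_ig_j/R_3^k$ cross terms) and verifying that each collapses without residue.
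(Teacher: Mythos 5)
Your proposal is correct and follows essentially the same route as the paper's proof: evaluate $H_{11}$, $H_{12}$, $H_{22}$ and $p$ from Theorem~\ref{Th:Rank0Points} at $P_\pm$ using $S_1=S_2=R_1=R_2=0$, $\langle\bS,\bR\rangle=g$, $\bR^2=a$ (so that the $\partial_u$ chain-rule remainders and the $g_2$-terms cancel exactly as you describe, reproducing the paper's~\eqref{Eq:Hij}), and then substitute into~\eqref{Eq:SpectrumQ}. One caveat: carrying your (correct) intermediate expressions through the final assembly yields the term $-gR_3\,\partial g_2/\partial R_3$, whereas the corollary as printed has $+gR_3\,\partial g_2/\partial R_3$; the plus sign appears to be a typo in the statement (it is inconsistent both with~\eqref{Eq:Hij} and with the sign of the $g\,\partial g_2/\partial x$ term in the discriminant $D_{a,g}$ used later), so your closing claim that the computation collapses to ``exactly the expression claimed'' should be read modulo that sign.
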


\begin{proof}
Calculating all expressions from Theorem~\ref{Th:Rank0Points}, we have 
\begin{equation} \label{Eq:Hij} 
\begin{gathered} 
H_{11}=1,\qquad H_{12}=-\frac1{R_3}\Bigl(\frac g{\beta R_3}+g_3(a,R_3)\Bigr)-ig_1(a,R_3),\\
H_{22}=\frac g{R^3_3}\Bigl(\frac g{\beta R_3}+g_3(a,R_3)\Bigr)-\\ 
-\frac1{R_3}\Bigl(g\frac{\partial g_2}{\partial R_3}(a,R_3)+\frac g{R_3}\frac{\partial g_3}{\partial R_3}(a,R_3)+\frac{\partial V}{\partial R_3}(a,R_3)\Bigr),
\end{gathered} 
\end{equation} 
and 
\begin{equation*} 
p=\frac g{R_3}\Bigl(\frac12-\frac1\beta\Bigr)-g_3(a,R_3).  
\end{equation*} 
Substituting them into~\eqref{Eq:SpectrumQ}, one obtains the required formula for $q$.
\end{proof}

In order to prove Theorem~\ref{Th:Rank0Points} we use the following criteria of non-degeneracy (see~\cite{BF}), which can be regarded as a definition.

\begin{definition} \label{Rank0NonDegenCrit} 
A point $P$ of rank $0$ for an integrable Hamiltonian system with Hamiltonian $H$ and integral $K$ on a symplectic manifold $M^4$ is non-degenerate 
iff the following two conditions hold{\rm:}
\begin{itemize} 
\item the linearizations $A_H$ and $A_K$ of the Hamiltonian vector fields $\sgrad H$ and $\sgrad K$ at the point $P$ are linear independent,
\item there exists a linear combination $\lambda A_H+\mu A_K$ with four different non-zero eigenvalues.
\end{itemize}
\end{definition}

Let us study the spectrum of linearization of $\sgrad H$ at the points of rank~0. Taking functions $S_1,S_2,R_1,R_2 $ as local coordinates in a neighbourhood of 
$0$-rank point $P_\pm$ on an orbit $M^4_{a,g}$ we have 
$$
R_3=\pm\sqrt{a-R_1^2-R_2^2},\qquad S_3=\frac1{R_3}(g-S_1R_1-S_2 R_2).
$$

Denote by $\hat H(S_1,S_2,R_1,R_2)$ the restriction of the fucntion $H$ onto $M^4_{a,g}$.

\begin{lemma} \label{L:SpecF} 
For any function $H$ commuting with $K=S_3$ the spectrum of the linearization operator $A_{\hat H}=\mathrm{Lin}(\sgrad\hat H)$ at the singular points $P_\pm$ 
of rank $0$ has the form $\sigma(A_{\hat H})=\{\pm i(p+\sqrt q),\pm i(p-\sqrt q)\}$, where $p$ and $q$ are given by \eqref{Eq:SpectrumP} and \eqref{Eq:SpectrumQ}. 
\end{lemma}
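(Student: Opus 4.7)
The plan is to compute the matrix of $A_{\hat H}$ at $P_\pm$ in local coordinates $(S_1,S_2,R_1,R_2)$ on $M^4_{a,g}$ and to exploit the $S^1$-invariance in order to reduce the $4\times 4$ real eigenvalue problem to a $2\times 2$ one over $\mathbb{C}$. The starting point is the identity $A_{\hat H}=\Pi\cdot\mathrm{Hess}(\hat H)$ at $P_\pm$, where $\Pi$ is the Poisson matrix in these coordinates. Reading off \eqref{Eq:E3PoissBracket} together with the formulas for $S_3$ and $R_3$, one finds at $P_\pm$ the block form
$$
\Pi=\begin{pmatrix} (g/R_3)\, J & R_3\, J \\ R_3\, J & 0 \end{pmatrix},\qquad J=\begin{pmatrix} 0 & 1 \\ -1 & 0 \end{pmatrix},
$$
which commutes with the complex structure $\widetilde J=J\oplus J$ on $\mathbb{R}^4$. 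An analogous direct computation shows that $\mathrm{Hess}(\hat K)$ involves only the identity block $I_2$ (no $J$), so that $A_{\hat K}=\Pi\cdot\mathrm{Hess}(\hat K)=-\widetilde J$. Because $\{H,K\}=0$ implies $[A_{\hat H},A_{\hat K}]=0$ at the common critical point, $A_{\hat H}$ commutes with $\widetilde J$, and hence defines a $\mathbb{C}$-linear operator $\hat A\colon\mathbb{C}^2\to\mathbb{C}^2$; its two complex eigenvalues, together with their complex conjugates, form the full real spectrum of $A_{\hat H}$.

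The next step is to compute $\mathrm{Hess}(\hat H)$ at $P_\pm$. By Theorem~\ref{A:Rank0Points} the ambient partials $H_{S_j},H_{R_j}$ ($j=1,2$) vanish at $P_\pm$. Differentiating the identity
$$
\{H,K\}=-R_2 H_{R_1}+R_1 H_{R_2}-S_2 H_{S_1}+S_1 H_{S_2}\equiv 0
$$
twice and evaluating at $P_\pm$ yields the symmetries $H_{S_1 S_2}=H_{R_1 R_2}=0$, $H_{S_1 S_1}=H_{S_2 S_2}$, $H_{R_1 R_1}=H_{R_2 R_2}$, $H_{S_1 R_1}=H_{S_2 R_2}$, and $H_{S_1 R_2}=-H_{S_2 R_1}$. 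Combining these with the second-order corrections coming from the restriction formulas $S_3=(g-S_1R_1-S_2R_2)/R_3$ and $R_3=\pm\sqrt{a-R_1^2-R_2^2}$ (namely $S_{3,S_1R_1}(P_\pm)=-1/R_3$, $S_{3,R_1R_1}(P_\pm)=g/R_3^3$, $R_{3,R_1R_1}(P_\pm)=-1/R_3$), one obtains
$$
\mathrm{Hess}(\hat H)=\begin{pmatrix} H_{11}\,I_2 & M \\ M^T & H_{22}\,I_2 \end{pmatrix},\qquad M=\begin{pmatrix} \mathrm{Re}\,H_{12} & -\mathrm{Im}\,H_{12} \\ \mathrm{Im}\,H_{12} & \mathrm{Re}\,H_{12} \end{pmatrix},
$$
with $H_{11}$, $H_{12}$, $H_{22}$ exactly as in Theorem~\ref{Th:Rank0Points}. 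In particular, $\mathrm{Hess}(\hat H)$ commutes with $\widetilde J$, as required.

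With both $\Pi$ and $\mathrm{Hess}(\hat H)$ in this $\widetilde J$-invariant form, the trace and determinant of $\hat A$ simplify to $\mathrm{tr}\,\hat A=-2ip$ and $\det\hat A=R_3^2(H_{11}H_{22}-|H_{12}|^2)=q-p^2$, so the characteristic polynomial of $\hat A$ is $\mu^2+2ip\,\mu+(q-p^2)=0$, with roots $\mu_{1,2}=-ip\pm\sqrt{-q}$. Including complex conjugates gives the four eigenvalues $\pm i(p\pm\sqrt q)$ of $A_{\hat H}$, as claimed. The main technical obstacle is the Hessian computation: systematically bookkeeping both the symmetries of the ambient second derivatives of $H$ at $P_\pm$ coming from $\{H,K\}=0$, and the second-order corrections from the restriction to $M^4_{a,g}$, so that the block $M$ is exactly the real representation of the complex number $H_{12}$ (with the $-H_{S_3}/R_3$ correction appearing in the real part) and the diagonal block picks up the $gH_{S_3}/R_3^3-H_{R_3}/R_3$ corrections that define $H_{22}$. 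After that, the reduction to the complex operator $\hat A$ makes the spectrum an immediate trace/determinant calculation.
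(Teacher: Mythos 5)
Your proposal is correct and follows essentially the same route as the paper: pass to the coordinates $(S_1,S_2,R_1,R_2)$ on $M^4_{a,g}$, use $A_{\hat H}=\mathcal A\,d^2\hat H$ together with the complex structure coming from $A_{\hat K}$ to reduce to a $2\times2$ complex matrix with entries $H_{11},H_{12},H_{22}$, and read off the spectrum from its trace and determinant (the paper simply asserts the form of $d^2\hat H$ ``by direct calculation,'' whereas you derive the block symmetries by differentiating $\{H,K\}=0$ twice, which is the same mechanism made explicit). The computations check out, including the restriction corrections $-H_{S_3}/R_3$, $gH_{S_3}/R_3^3$ and $-H_{R_3}/R_3$; the apparent sign difference in the linear term of the characteristic polynomial does not affect the spectrum as a set.
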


\begin{proof} 
In the coordinates $S_1,S_2,R_1,R_2$ the Poisson bracket on the symplectic leaf $M^4_{a, g}$ has the form 
$$
\mathcal A=\left(\begin{matrix}
0   &S_3&0   &R_3\\
-S_3&0  &-R_3&0  \\
0   &R_3&0   &0  \\
-R_3&0  &0   &0 
\end{matrix}\right).
$$
It is easy to check that the linearization of $\sgrad K$ defines a complex structure on the tangent space: 
\begin{equation} \label{Eq:AK}
A_{\hat K}=\mathrm{Lin}(\sgrad\hat K)=\left(\begin{matrix}
0&-1& 0& 0\\ 
1& 0& 0& 0\\
0& 0& 0&-1\\ 
0& 0& 1& 0
\end{matrix}\right).
\end{equation}

Since $[A_{\hat H},A_{\hat K}]=0$, the operator $A_{\hat H}$ can be complexified. The matrix of the Poisson structure can also be complexified,
i.e., we can identify $(2\times2)$-blocks $\left(\begin{smallmatrix}\alpha&-\beta\\\beta&\alpha\end{smallmatrix}\right)$ in matrices with complex 
numbers $\alpha+i\beta$. Thus, in the complex coordinates $S_1+iS_2,R_1+iR_2$ the matrix~$\mathcal A$ of the Poisson structure has the form 
$$
\mathcal A=\begin{pmatrix}-iS_3&-iR_3\\-iR_3&0\end{pmatrix}.
$$
On a symplectic manifold we have $A_{\hat H}=\mathcal A\,d^2\hat H$, and therefore $d^2\hat H$ can also be complexified. By direct calculation we get
$$
d^2\hat H=\begin{pmatrix}H_{11}&H_{12}\\ \,\overline{\hbox{\vphantom|$\!H\!$}}\,_{12}&H_{22}\end{pmatrix},
$$
where $H_{lj}$ are given by formulas \eqref{Eq:Hij}. The imaginary parts of $H_{11}$ and $H_{22}$ vanish because $H$ commutes with $K$. 

Using the fact that if $\mu_1,\mu_2$ are eigenvalues of a matrix $(A+iB)$ for real matrices $A,B$, then the matrix 
$\left(\begin{smallmatrix}A&B\\-B&A\end{smallmatrix}\right)$ has the eigenvalues $\mu_1,\mu_2,\overline\mu_1,\overline\mu_2$, 
we obtain that the specturm of the (real) operator $A_{\hat H}$ is given by the equation
$$
\mu^2-i(S_3H_{11}+R_3H_{12}+R_3\,\overline{\hbox{\vphantom|$\!H\!$}}\,_{12})\mu+R_3^2(H_{11}H_{22}-|H_{12}|^2)=0,
$$
which solutions give the desired spectrum. Lemma \ref{L:SpecF} is proved.  
\end{proof}

\begin{remark}
It is clear from \eqref{Eq:AK} that for the integral $K=S_3$ the spectrum of the corresponding operator $A_{\hat K}$ 
is $\sigma(A_{\hat K})=\{i,-i,i,-i\}$. This doesn't immediately prove non-deneracy of points but shows that non-degenerate 
points can be only of center-center or focus-focus type.
\end{remark}

\begin{proof}[Proof of Theorem {\rm\ref{Th:Rank0Points}}]
Using  Lemma \ref{L:SpecF} and Definition \ref{Rank0NonDegenCrit} of non-degeneracy we get the condition of the theorem in all cases except 
for $q=0$ or $p^2=q$.

If $q=0$, then the spectra of $A_{\hat H}$ and $A_{\hat K}$ are proportional, thus the point is degenerate (this is precisely the moment 
when the image of a focus-focus point meets an arc of the bifurcation diagram while transforming into a center-center point).

If $p^2=q$, then the point is non-degenerate, and one should just take another linear combination with different eigenvalues 
(such a linear combination exists since the spectra of $A_{\hat H}$ and $A_{\hat K}$ are non-proportional).
\end{proof}

%%%%%%%%%%%%%%%%%%%%%%%%%%%%%%%%%%%%%%%%%%%%%%%%%%%%%%%%%%%%%%%%%%%%%%%%%%%%%%%%%%%%%%%%%%%%%%%%%%%%%%%%%%%%%%%%%%%%%%%%%%%%%%%%%%%%%%%%%%%
\section{Bifurcation diagrams}

In order to construct the bifurcation diagram let us describe all critical points of the momentum mapping. The singular points of rank $0$ are found 
in Section \ref{S:CritPointsRank0}. Thus, it remains to describe only singular points of rank $1$. The next two lemmas show that we can use some 
convenient coordinates for investigating them.

\begin{lemma} \label{L:R12zero}
For a Hamiltonian system with Hamiltonian $H$ of the form {\rm\eqref{Eq:H}} and integral $K=S_3$, the subspace $\{(\bS,\bR)\mid R_1=R_2=0\}$ in $\eLie^*$
does not contain points of rank~$1$.
\end{lemma}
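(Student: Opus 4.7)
The plan is to compare the $\bR$-components of $\sgrad H$ and $\sgrad K$ at points of the subspace $\{R_1=R_2=0\}$. By formula~\eqref{sgrad} applied to $K=S_3$ we have $\sgrad K=(-S_2,S_1,0,-R_2,R_1,0)$, so its $\bR$-component vanishes identically on this subspace. For a rank-$1$ point one needs $\sgrad K\ne 0$ and $\sgrad H$ proportional to $\sgrad K$; in particular, the $\bR$-component $\frac{\partial H}{\partial\bS}\times\bR$ of $\sgrad H$ must also vanish there.

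Next I would compute $\frac{\partial H}{\partial\bS}$ from Hamiltonian~\eqref{Eq:H} at a point with $R_1=R_2=0$. Every term other than $\frac12(S_1^2+S_2^2+S_3^2/\beta)$ contributes to $\partial_{S_1}H$ and $\partial_{S_2}H$ only through factors of $R_1$ or $R_2$ (the term $g_1(S_1R_2-S_2R_1)$ and the term $g_2\langle\bS,\bR\rangle$), so these derivatives collapse to $\partial_{S_1}H=S_1$ and $\partial_{S_2}H=S_2$ at the point. Crossing the resulting vector $\bigl(S_1,S_2,S_3/\beta+g_2R_3+g_3\bigr)$ with $\bR=(0,0,R_3)$ then gives $\frac{\partial H}{\partial\bS}\times\bR=(S_2R_3,-S_1R_3,0)$.

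Finally, on a regular orbit $M^4_{a,g}$ with $a>0$, the condition $R_1=R_2=0$ forces $R_3^2=a>0$, hence $R_3\ne 0$. Vanishing of the $\bR$-component therefore yields $S_1=S_2=0$; combined with $R_1=R_2=0$ this makes $\sgrad K=0$, contradicting rank~$1$. The alternative case $\sgrad K=0$ is already handled by Theorem~\ref{A:Rank0Points}, which shows $\sgrad H=0$ there as well, so the point is of rank~$0$ rather than rank~$1$. There is no real obstacle: the argument is mechanical once one observes that the $\bR$-block of $\sgrad K$ degenerates on the chosen subspace while the $\bR$-block of $\sgrad H$ does not, the only care needed being a check that in \eqref{Eq:H} each contribution to $\partial_{S_1}H,\partial_{S_2}H$ beyond the kinetic part carries an $R_1$ or $R_2$ factor.
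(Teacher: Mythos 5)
Your proof is correct and follows essentially the same route as the paper's: both reduce the claim to the vanishing of the $\bR$-component $\frac{\partial H}{\partial\bS}\times\bR$ of $\sgrad H$ forced by proportionality with $\sgrad K$ (whose $\bR$-component dies on $\{R_1=R_2=0\}$), and then use the specific form \eqref{Eq:H} to conclude $\partial_{S_1}H=S_1$, $\partial_{S_2}H=S_2$, hence $S_1=S_2=0$ and rank $0$ by Theorem~\ref{A:Rank0Points}. No gaps.
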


\begin{proof}
Since we know all singular points of rank~0 (they are points with $R_1=R_2=S_1=S_2=0$; see Theorem~\ref{A:Rank0Points}), it suffices to prove 
that if $y=(S_1,S_2,S_3,0,0,R_3)\in\eLie^*$ is a singular point, then its coordinates $S_1$ and $S_2$ vanish. Suppose that this is not the case. 
Then $\sgrad_yK=(-S_2,S_1,0,0,0,0)\ne0$ and, therefore, $\sgrad_yH=\lambda\sgrad_yK$ for a certain $\lambda$. Hence, by formula \eqref{sgrad} 
(taking into account that $R_3\ne0$), we have $\frac{\partial H}{\partial S_1}=\frac{\partial H}{\partial S_2}=0$ at the point $y$.
But for a Hamiltonian of the form~\eqref{Eq:H} this is possible only if $S_1=S_2=0$ for the point $y$.
\end{proof}

Now, since we can assume that $R_1^2+R_2^2\ne0$, we choose new coordinates on the remaining set of points $U=\bbR^6(\bS,\bR)\setminus\{R_1=R_2=0\}$.
Note that the set $U$ is homeomorphic to $\bbR^5\times S^1$.

\begin{lemma} \label{L:coord}
Formulas
\begin{equation} \label{Eq:newcoord}
\begin{gathered}  
S_1=\frac{(g-kx)\cos\phi+m\sin\phi}{\sqrt{a-x^2}},\quad S_2=\frac{(g-kx)\sin\phi-m\cos\phi}{\sqrt{a-x^2}},\\ 
S_3=k,\quad R_1=\sqrt{a-x^2}\,\cos\phi,\quad R_2=\sqrt{a-x^2}\,\sin\phi,\quad R_3=x
\end{gathered} 
\end{equation} 
define regular coordinates $(x,m,\phi,k,a,g)$ on the set $U$, where $x^2<a$ and $\phi$ is an angular coordinate, i.e., is defined modulo $2\pi$. 

The inverse change of variables on the set $U$, i.e., the expression of $(x,m,\phi,k,a,g)$ through $(\bS,\bR)$ is as follows{\/\rm:}
\begin{equation*} \label{Eq:KIntegrals}
\begin{gathered}  
x=R_3,\quad m=M(\bS,\bR)=S_1R_2-S_2R_1,\quad\phi=\arg(R_1+iR_2),\\
k=S_3,\quad a=F_1(\bS,\bR)=\langle\bR,\bR\rangle,\quad g=F_2(\bS,\bR)=\langle\bS,\bR\rangle.
\end{gathered} 
\end{equation*} 
\end{lemma}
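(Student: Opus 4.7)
The plan is to check that the stated inverse map is well-defined and smooth on $U$, and then verify directly that the two maps compose to the identity; regularity will follow from the fact that both maps are smooth mutual inverses.

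First I would observe that on $U$ we have $R_1^2+R_2^2>0$, so for any $(\bS,\bR)\in U$ the Casimir $a=F_1(\bS,\bR)=R_1^2+R_2^2+R_3^2$ satisfies $a-x^2=R_1^2+R_2^2>0$, which makes $\sqrt{a-x^2}$ positive and real, and $\phi=\arg(R_1+iR_2)\in S^1$ unambiguously defined modulo $2\pi$. The remaining functions $m=S_1R_2-S_2R_1$, $k=S_3$, $g=\langle\bS,\bR\rangle$ and $x=R_3$ are smooth everywhere, so the ``inverse'' map $(\bS,\bR)\mapsto(x,m,\phi,k,a,g)$ is well defined and smooth on $U$ with image in $\{x^2<a\}\times S^1$.

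Second, I would verify the identity in the nontrivial direction: starting from $(x,m,\phi,k,a,g)$ with $x^2<a$, apply \eqref{Eq:newcoord} to produce $(\bS,\bR)$, and then recompute the six quantities. The relations $R_3=x$ and $S_3=k$ are immediate; $R_1^2+R_2^2+R_3^2=(a-x^2)(\cos^2\phi+\sin^2\phi)+x^2=a$ recovers $F_1$, and $\arg(R_1+iR_2)=\phi$ is clear. The only computations requiring a line of algebra are
$$
S_1R_2-S_2R_1=m(\sin^2\phi+\cos^2\phi)=m,\qquad S_1R_1+S_2R_2=(g-kx)(\cos^2\phi+\sin^2\phi)=g-kx,
$$
so $\langle\bS,\bR\rangle=(g-kx)+kx=g$. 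This is the trigonometric identity mentioned.

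Third, the other composition: starting from $(\bS,\bR)\in U$, setting $x=R_3$, $k=S_3$, $a=F_1$, $g=F_2$, $m=M$, $\phi=\arg(R_1+iR_2)$, the formulas for $R_1,R_2,R_3,S_3$ in \eqref{Eq:newcoord} obviously reproduce $\bR$ and $S_3$. For $S_1,S_2$ the point is that the linear system
$$
\begin{cases} S_1R_1+S_2R_2=g-kx,\\ S_1R_2-S_2R_1=m \end{cases}
$$
has determinant $-(R_1^2+R_2^2)=-(a-x^2)\ne0$ on $U$, and its unique solution coincides with the formulas in \eqref{Eq:newcoord}. Both maps being smooth mutual inverses, they are diffeomorphisms, which proves regularity of the coordinates. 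There is no real obstacle here; the only point that needs attention is the strict positivity $a-x^2>0$ on $U$, which is exactly what was arranged by excising the locus $R_1=R_2=0$.
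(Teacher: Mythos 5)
Your proof is correct and follows essentially the same route as the paper: a direct verification that the two sets of formulas are mutually inverse on $U$. The only cosmetic difference is that the paper establishes regularity by explicitly computing the Jacobian determinant, $\det\frac{\partial(x,m,\phi,k,a,g)}{\partial(S_1,S_2,S_3,R_1,R_2,R_3)}=2(R_1^2+R_2^2)\ne0$, whereas you deduce it from the fact that two smooth mutually inverse maps are diffeomorphisms; both are valid.
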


\begin{proof}
By direct calculation, it is easy to check that given formulas define a bijection and that the Jacobian does not vanish on $U$:
$$
\det\frac{\partial(x,m,\phi,k,a,g)}{\partial(S_1,S_2,S_3,R_1,R_2,R_3)}=2(R_1^2+R_2^2)\ne0.
$$

\kern-5.mm
\end{proof}

Substituting expressions \eqref{Eq:newcoord} into \eqref{Eq:H}, we obtain that the Hamiltonian in the coordinates $(x,m,\phi,k,a,g)$ 
on the set $U$ has the form
\begin{equation}  \label{Eq:HamiltIntegrCoord} 
H=\frac{(g{-}kx)^2{+}m^2}{2(a-x^2)}+\frac{k^2}{2\beta}+g_1(a,x)m+g_2(a,x)g+g_3(a,x)k+V(a,x). 
\end{equation}

Futher we will often write $g_1,g_2,g_3,V$ without arguments assuming that they are functions of $a$ and $x$.

The next statement describes the set of singular points of rank~1.

\begin{theorem} \label{Th:CriticalCircle} 
The set of all singular points of rank~$1$ for the system with Hamiltonian \eqref{Eq:H} and integral $K=S_3$ on $\eLie^*$ is given 
by the following two equations in the coordinates $(x,m,\phi,k,a,g)${\rm:}
\begin{gather} 
\label{Eq:Rank1Points-m} 
m=-(a-x^2)g_1, \\
\label{Eq:Rank1Points-k} 
\frac{(kx{-}g)(ka{-}gx)}{(a-x^2)^2}+xg_1^2-(a{-}x^2)g_1\frac{\partial g_1}{\partial x}+
g\frac{\partial g_2}{\partial x}+k\frac{\partial g_3}{\partial x}+\frac{\partial V}{\partial x}=0.
\end{gather} 
\end{theorem}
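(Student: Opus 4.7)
My plan is to work inside the open set $U$ of Lemma \ref{L:coord}, using the coordinates $(x,m,\phi,k,a,g)$. By Lemma \ref{L:R12zero} every rank-$1$ point of the system lies in $U$, so this loses nothing. Restricted to a symplectic leaf $M^4_{a,g}$ we have coordinates $(x,m,\phi,k)$, in which $K=k$ and Hamiltonian \eqref{Eq:HamiltIntegrCoord} does not depend on $\phi$. Hence $dK=dk$ is nowhere zero on $M^4_{a,g}\cap U$, and $dH=H_x\,dx+H_m\,dm+H_k\,dk$.

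A point of $M^4_{a,g}\cap U$ is critical of rank at most $1$ for the momentum mapping $(H,K)$ iff $dH$ and $dK$ are linearly dependent. Since $dK\ne 0$ this is equivalent to $H_x=0$ and $H_m=0$, and moreover forces the rank to be exactly $1$ (the rank-$0$ points of Theorem \ref{A:Rank0Points} all satisfy $R_1=R_2=0$ and so lie outside $U$). Computing from \eqref{Eq:HamiltIntegrCoord} gives $H_m=m/(a-x^2)+g_1(a,x)$, and setting it to zero yields \eqref{Eq:Rank1Points-m} immediately.

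For the other condition, applying the quotient rule to the first summand of \eqref{Eq:HamiltIntegrCoord} and using the algebraic identity $-k(g-kx)(a-x^2)+x(g-kx)^2=(kx-g)(ka-gx)$, one obtains
$$
H_x=\frac{(kx-g)(ka-gx)+xm^2}{(a-x^2)^2}+m\frac{\partial g_1}{\partial x}+g\frac{\partial g_2}{\partial x}+k\frac{\partial g_3}{\partial x}+\frac{\partial V}{\partial x}.
$$
Substituting $m=-(a-x^2)g_1$ from \eqref{Eq:Rank1Points-m} turns $xm^2/(a-x^2)^2$ into $xg_1^2$ and $m\,\partial g_1/\partial x$ into $-(a-x^2)g_1\,\partial g_1/\partial x$, so $H_x=0$ becomes exactly \eqref{Eq:Rank1Points-k}.

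The only real obstacle is carrying out the polynomial identity in the numerator of $\partial_x$ of the kinetic-like term without error; everything else is conceptual bookkeeping. The underlying reason the result is so clean is that the coordinates of Lemma \ref{L:coord} are adapted to the $S^1$-symmetry generated by $K$, so rank-$1$ critical points are precisely the critical points of the reduced Hamiltonian $H(x,m,k;a,g)$ in the $(x,m)$-variables with $k$, $a$, $g$ held fixed.
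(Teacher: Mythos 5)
Your proof is correct and takes essentially the same route as the paper: both work in the adapted coordinates of Lemma~\ref{L:coord}, reduce the rank-$1$ condition to $\frac{\partial H}{\partial m}=\frac{\partial H}{\partial x}=0$, and obtain the two equations by differentiating \eqref{Eq:HamiltIntegrCoord} and substituting \eqref{Eq:Rank1Points-m}. The only cosmetic difference is that you express the dependence condition via $dH$ and $dK$ restricted to the leaf, while the paper writes $\sgrad H=\lambda\,\sgrad K$ using the explicit Poisson matrix; on a symplectic leaf these are equivalent.
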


\begin{proof} 
Calculating the matrix of the Poisson bracket in the coordinates $(x,m,\phi,k,a,g)$, one obtains
$$
\begin{pmatrix} 
0    &a-x^2&0 &0&0&0\\
x^2-a&0    &0 &0&0&0\\
0    &0    &0 &1&0&0\\
0    &0    &-1&0&0&0\\
0    &0    &0 &0&0&0\\
0    &0    &0 &0&0&0
\end{pmatrix}.
$$
Therefore, in these coordinates the skew-gradients of $H$ and $K$ are
\begin{equation}  \label{Eq:sgradHandK} 
\begin{aligned}
\sgrad H&=\Bigl((a-x^2)\frac{\partial H}{\partial m},(x^2-a)\frac{\partial H}{\partial x},\frac{\partial H}{\partial k},0,0,0\Bigr),\\
\sgrad K&=(0,0,1,0,0,0).
\end{aligned} 
\end{equation}
Here we take into account that $\frac{\partial H}{\partial\phi}=\{H,K\}\equiv0$.

Thus the condition of linear dependence of $\sgrad H$ and $\sgrad K$ at a point $y\in\eLie^*$
$$
\sgrad H=\lambda\sgrad K
$$
is equivalent to the conditions
\begin{equation}  \label{Eq:lambda}
\frac{\partial H}{\partial m}=0,\qquad\frac{\partial H}{\partial x}=0,\qquad\frac{\partial H}{\partial k}=\lambda
\end{equation}
at the point $y$. Differentiating Hamiltonian \eqref{Eq:HamiltIntegrCoord} with respect to $m$ and $x$, we see that $\frac{\partial H}{\partial m}=0$ 
is equivalent to \eqref{Eq:Rank1Points-m} and $\frac{\partial H}{\partial x}=0$ is equivalent to \eqref{Eq:Rank1Points-k} after the substitution of $m$ 
from \eqref{Eq:Rank1Points-m}.
\end{proof} 

\begin{corollary} \label{Cor:CriticalCircle} 
On each orbit $M^4_{a,g}$ the set of singular points of rank~$1$ form a one-parameter family of critical circles, which is parametrized by points $(k,x)$ 
of curves defined by equation~\eqref{Eq:Rank1Points-k}. For each point $(k,x)$ satisfying~\eqref{Eq:Rank1Points-k} the corresponding critical circle 
in $M^4_{a,g}$ is given by the formulas
$$
\begin{gathered} 
S_1{=}\frac{(g{-}kx)\cos\phi{-}(a{-}x^2)g_1\sin\phi}{\sqrt{a-x^2}},\quad 
S_2{=}\frac{(g{-}kx)\sin\phi{+}(a{-}x^2)g_1\cos\phi}{\sqrt{a-x^2}},\\
S_3=k,\quad R_1=\sqrt{a-x^2}\,\cos\phi,\quad R_2=\sqrt{a-x^2}\,\sin\phi,\quad R_3=x,
\end{gathered}
$$
where $\phi$ is a parameter on the circle.
\end{corollary}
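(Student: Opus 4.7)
The plan is to combine the three preceding results---Lemma \ref{L:R12zero}, Lemma \ref{L:coord} and Theorem \ref{Th:CriticalCircle}---with a straightforward substitution. The corollary is really a restatement of Theorem \ref{Th:CriticalCircle} in ambient coordinates, together with the observation that among the six coordinates $(x,m,\phi,k,a,g)$ only $\phi$ remains free after fixing the orbit and imposing the two critical equations.

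First, I would invoke Lemma \ref{L:R12zero} to guarantee that every rank-$1$ point lies in the open set $U=\eLie^*\setminus\{R_1=R_2=0\}$, so that the coordinate system $(x,m,\phi,k,a,g)$ from Lemma \ref{L:coord} is well defined at every such point. Fixing an orbit $M^4_{a,g}$ freezes the last two coordinates, leaving $(x,m,\phi,k)$ as coordinates on the open subset $M^4_{a,g}\cap U$.

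Next, I would apply Theorem \ref{Th:CriticalCircle}: on $M^4_{a,g}\cap U$ the rank-$1$ locus is cut out by equations \eqref{Eq:Rank1Points-m} and \eqref{Eq:Rank1Points-k}. Equation \eqref{Eq:Rank1Points-m} determines $m$ as an explicit function $m=-(a-x^2)g_1(a,x)$ of $x$ alone, while equation \eqref{Eq:Rank1Points-k}---after substituting this value of $m$---is a relation involving only $x$ and $k$. Neither equation constrains $\phi$. Hence for each point $(k,x)$ on the planar curve defined by \eqref{Eq:Rank1Points-k} the rank-$1$ set in $M^4_{a,g}$ contains precisely the circle $\{\phi\in[0,2\pi)\}$, parametrised by the angular coordinate.

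Finally, I would write down the explicit embedding of this circle in $\eLie^*$ by plugging the relation $m=-(a-x^2)g_1$ into the coordinate formulas \eqref{Eq:newcoord}, which immediately reproduces the expressions for $S_1,S_2,S_3,R_1,R_2,R_3$ claimed in the statement. No step requires any genuine calculation beyond this substitution; the only thing to be careful about is pointing out that the condition $R_1^2+R_2^2\ne 0$ (guaranteed by Lemma \ref{L:R12zero}) is exactly what ensures $\phi$ is a bona fide $S^1$-coordinate, so each pair $(k,x)$ truly yields a topological circle rather than a lower-dimensional set.
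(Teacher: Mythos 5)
Your proposal is correct and follows essentially the same route as the paper: both reduce the statement to Theorem~\ref{Th:CriticalCircle} via the coordinates of Lemma~\ref{L:coord} and obtain the explicit formulas by substituting \eqref{Eq:Rank1Points-m} into \eqref{Eq:newcoord}. The only cosmetic difference is that the paper identifies the critical circles as coordinate lines of $\phi$ by noting $\sgrad K=\frac{\partial}{\partial\phi}$, whereas you argue equivalently that $\phi$ is the only coordinate left unconstrained by the two critical equations.
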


\begin{proof} 
As it is shown in the proof of Theorem \ref{Th:CriticalCircle}, $\sgrad K=\frac{\partial}{\partial\phi}$ in the coordinates $(x,m,\phi,k,a,g)$. Therefore, 
each critical circle is a coordinate line of the coordinate $\phi$. Substituting \eqref{Eq:Rank1Points-m} into expressions \eqref{Eq:newcoord}, we obtain 
the required formulas.
\end{proof} 

Now we can describe the bifurcation diagram. For each pair of parameters $a,g$, where $a>0$, consider the function 
\begin{equation} \label{Eq:W} 
W_{a,g}(k,x)=\frac{(g-kx)^2}{2(a-x^2)}+\frac{k^2}{2\beta}-\frac{g^2_1}2(a-x^2)+g_2g+g_3k+V,  
\end{equation} 
which is an analogue of a reduced potential. Recall that $g_1,g_2,g_3,V$ are functions of $a$ and $x$.

\begin{theorem} \label{Th:BifDiag0} 
The bifurcation diagram of the integrable Hamiltonian system with Hamiltonian $\eqref{Eq:H}$ and the integral $K=S_3$ on orbit $\eqref{Eq:LevelSurf}$ 
consists of the following subsets on the plane $\bbR^2(h,k)${\rm:}

$1)$ two points $Z_\pm$ {\rm(}they can coinside if $g=0${\rm)} with coordinates
$$
h=\frac{g^2}{2\beta a}+g\,g_2(a,\pm\sqrt a)\pm\frac g{\sqrt a}\,g_3(a,\pm\sqrt a)+V(a,\pm\sqrt a),\quad k=\pm\frac g{\sqrt a},
$$
which are the images of two singular points of rank $0${\rm;}

$2)$ the points $(h(x),k(x))$ which are the images of singular points of rank $1$ and are parametrized by the parameter $x$, 
where the function $k(x)$ is implicitly defined by the quadratic {\rm(}or linear{\rm)} equation 
$\frac{\partial W_{a,g}}{\partial x}(k,x)=0$, and $h(x)=W_{a,g}(k(x),x)$.
\end{theorem}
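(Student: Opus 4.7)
By definition, the bifurcation diagram is the image under the momentum mapping $\mathcal{F}=(H,K)\colon M^4_{a,g}\to\bbR^2(h,k)$ of the critical set of $\mathcal{F}$. The plan is to handle the two strata of critical points separately: first the rank-$0$ points listed in Theorem~\ref{A:Rank0Points}, then the rank-$1$ points described by Theorem~\ref{Th:CriticalCircle}.

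For the rank-$0$ contribution, I would substitute the two points $P_\pm=(0,0,\pm g/\sqrt a,0,0,\pm\sqrt a)$ directly into $K=S_3$ and into the Hamiltonian~\eqref{Eq:H}. At $P_\pm$ we have $S_1=S_2=R_1=R_2=0$, so only the terms $\tfrac1{2\beta}S_3^2$, $g_2 g$, $g_3 S_3$, and $V$ survive, evaluated at $R_3=\pm\sqrt a$ and $\bR^2=a$. This immediately yields the two points $Z_\pm$ of statement~1.

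For the rank-$1$ contribution, I would work in the coordinates $(x,m,\phi,k,a,g)$ from Lemma~\ref{L:coord}, in which the Hamiltonian is given by~\eqref{Eq:HamiltIntegrCoord}. By Theorem~\ref{Th:CriticalCircle}, the rank-$1$ critical set on $M^4_{a,g}$ consists of circles (the orbits of $\partial/\partial\phi=\sgrad K$) labelled by pairs $(k,x)$ that satisfy $m=-(a-x^2)g_1$ together with~\eqref{Eq:Rank1Points-k}. The key observation is that once $m=-(a-x^2)g_1$ is substituted into~\eqref{Eq:HamiltIntegrCoord}, the two $g_1$-dependent contributions $m^2/[2(a-x^2)]$ and $g_1 m$ collapse to $-\tfrac12(a-x^2)g_1^2$, so the restriction of $H$ to this locus equals precisely $W_{a,g}(k,x)$ from~\eqref{Eq:W}. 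Since $K$ just reads off $k$, the image of a critical circle labelled by $(k,x)$ is the point $(W_{a,g}(k,x),k)$.

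It remains to identify equation~\eqref{Eq:Rank1Points-k} with $\partial W_{a,g}/\partial x=0$. I would differentiate $W_{a,g}$ term by term; the only non-routine identity is
\[
\frac{\partial}{\partial x}\Bigl[\frac{(g-kx)^2}{2(a-x^2)}\Bigr]=\frac{(kx-g)(ka-gx)}{(a-x^2)^2},
\]
together with $\partial_x[-\tfrac12(a-x^2)g_1^2]=xg_1^2-(a-x^2)g_1\partial g_1/\partial x$; the remaining terms match trivially. Expanding $(kx-g)(ka-gx)=ax\,k^2-g(a+x^2)k+g^2x$ shows that $\partial W_{a,g}/\partial x=0$ is quadratic in $k$ with leading coefficient $ax/(a-x^2)^2$, which degenerates to zero exactly at $x=0$ (explaining the ``or linear'' clause); this equation therefore implicitly defines $k=k(x)$ and hence $h(x)=W_{a,g}(k(x),x)$. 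The argument is essentially a direct computation, and no serious obstacle is expected: the only conceptual step is the recognition that the restriction of $H$ to the locus $\{m=-(a-x^2)g_1\}$ yields the reduced potential $W_{a,g}$, after which everything else is routine differentiation and algebra.
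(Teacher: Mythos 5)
Your proposal is correct and follows essentially the same route as the paper, which simply cites Theorem~\ref{A:Rank0Points}, Theorem~\ref{Th:CriticalCircle}, expression~\eqref{Eq:HamiltIntegrCoord}, and definition~\eqref{Eq:W}; you have just spelled out the computations (the collapse of $m^2/[2(a-x^2)]+g_1m$ to $-\tfrac12(a-x^2)g_1^2$ and the identification of~\eqref{Eq:Rank1Points-k} with $\partial W_{a,g}/\partial x=0$) that the paper leaves implicit, and all of them check out.
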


\begin{proof} 
The first statement immediately follows from Theorem \ref{A:Rank0Points} describing singular points of rank~0.
Similarly, the second one follows from Theorem \ref{Th:CriticalCircle} describing singular points of rank~1 by taking into account 
expression \eqref{Eq:HamiltIntegrCoord} for the Hamiltonian $H$ and definition \eqref{Eq:W} of the function~$W_{a,g}$.
\end{proof} 

\begin{remark} 
For each fixed $a,g$ the equations from Theorem {\rm\ref{Th:BifDiag0}}
\begin{equation}  \label{Eq:envelope}
h=W_{a,g}(k,x),\qquad \frac{\partial W_{a,g}}{\partial x}(k,x)=0
\end{equation}
describing the image of the set of singular points of rank~$1$ belonging to the orbit $M^4_{a,g}$ are exactly the equations for the envelope of the family 
of parabolas
\begin{equation*} \label{Eq:h(k)}
h=\Bigl(\frac{x^2}{2(a-x^2)}+\frac1{2\beta}\Bigr)k^2+B_{a,g}(x)k+C_{a,g}(x)
\end{equation*}
on the plane $\bbR^2(h,k)$ depending on the parameter $x$, where
\begin{equation} \label{Eq:BC(x)}
\begin{aligned}
B_{a,g}(x)&=g_3(a,x)-\frac{gx}{a-x^2},\\
C_{a,g}(x)&=\frac{g^2}{2(a-x^2)}-\frac{g^2_1(a,x)}2(a-x^2)+g_2(a,x)g+V(a,x)
\end{aligned}
\end{equation}
{\rm(}see formula \eqref{Eq:W}{\rm)}. 
In other words, the bifurcation diagram {\rm(}without points $Z_\pm${\rm)} can be regarded as the envelope of this family of parabolas.
\end{remark}

The bifurcation diagram $\Sigma$ is the union of $\Sigma_0=\{Z_\pm\}$ and $\Sigma_1$ which consists of the images of singular points of rank~$1$.
Let us rewrite conditions \eqref{Eq:envelope} describing $\Sigma_1$ in a more explicit parametric form.

The relation $\frac{\partial W_{a,g}}{\partial x}(k,x)=0$ from Theorem {\rm\ref{Th:BifDiag0}} is exactly equation \eqref{Eq:Rank1Points-k}.
In notation \eqref{Eq:BC(x)} it can be written as
\begin{equation} \label{Eq:k-quadratic}
\frac{ax}{(a-x^2)^2}k^2+B'_{a,g}(x)k+C'_{a,g}(x)=0,
\end{equation}
where
$$
\begin{aligned}
B'_{a,g}(x)&=\frac{\partial g_3}{\partial x}-\frac{g(a+x^2)}{(a-x^2)^2},\\
C'_{a,g}(x)&=\frac{g^2x}{(a-x^2)^2}+xg_1^2-(a-x^2)g_1\frac{\partial g_1}{\partial x}+g\frac{\partial g_2}{\partial x}+\frac{\partial V}{\partial x}.
\end{aligned}
$$
Equation \eqref{Eq:k-quadratic} is quadratic with respect to~$k$ for $x\ne0$ (it is reduced to linear equation for $x=0$). Its discriminant equals
$$
\begin{aligned}
D_{a,g}(x)
&=(B'_{a,g}(x))^2-\frac{4ax}{(a{-}x^2)^2}C'_{a,g}(x)=\frac1{(a{-}x^2)^2}\Bigl(g-(a{+}x^2)\frac{\partial g_3}{\partial x}\Bigr)^2-\\
&-\frac{4ax}{(a-x^2)^2}\Bigl(xg_1^2-(a-x^2)g_1\frac{\partial g_1}{\partial x}+g\frac{\partial g_2}{\partial x}+
x\Bigl(\frac{\partial g_3}{\partial x}\Bigr)^2+\frac{\partial V}{\partial x}\Bigr).
\end{aligned}
$$

In order to describe a parametrization of bifurcational curves consider the set
$$
\Theta_{a,g}=\{x\in\bbR\mid x^2<a,\,\,x\ne0,\,\,D_{a,g}(x)\ge0\}.
$$
Each its (arcwise) connected component is an interval, which is either non-dege\-ne\-rate (i.e., has a non-zero length) or degenerate (i.e., is a point). 
Denote the set of all non-degenerate intervals by $\cI_{a,g}$ and denote the set of degenerate intervals by $\Theta^0_{a,g}$.
Clearly, $\Theta_{a,g}\setminus\Theta^0_{a,g}=\bigcup_{I\in\cI_{a,g}}I$.

Since $\Theta_{a,g}$ is, evidently, a closed subset of $(-\sqrt a,0)\cup(0,\sqrt a)$, 
intervals from $\cI_{a,g}$ contain their endpoints except for the case when an endpoint is $\pm\sqrt a$ or $0$.

Thus, the set $\Sigma_1$ in the plane $\bbR^2(h,k)$ contains curves defined on intervals from $\cI_{a,g}$, 
``separate'' points corresponding to points from $\Theta^0_{a,g}$, and, possibly, something else corresponding to $x=0$. 
An explicite description of $\Sigma_1$ is given in the following statement.

\begin{theorem} \label{Th:BifDiag} 
The set $\Sigma_1$ for the integrable Hamiltonian system with Hamiltonian $\eqref{Eq:H}$ and the integral $K=S_3$ on orbit $\eqref{Eq:LevelSurf}$ 
is the union of the following parametric curves and points on the plane $\bbR^2(h,k)${\rm:}

$1)$ 
the pairs of curves $(h_\pm(x),k_\pm(x))$, $x\in I$, for each $I\in\cI_{a,g}$, where
\begin{equation} \label{Eq:BifCurve}
\begin{aligned} 
&h_\pm(x)=\frac{(g{-}k_\pm(x)x)^2}{2(a-x^2)}{+}\frac{k_\pm^2(x)}{2\beta}\,{-}\,\frac{(a{-}x^2)g^2_1}2+g_2g+g_3k_\pm(x)+V,\!\!\!\\   
&k_\pm(x)=\frac{g(a+x^2)}{2ax}-\frac{(a-x^2)^2}{2ax}\frac{\partial g_3}{\partial x}\pm\frac{(a-x^2)}{2ax}\times\!\!\!\\
\!\!\!\!\times&\sqrt{\!\!\Bigl(\!g{-}(a{+}x^2)\frac{\partial g_3}{\partial x}\!\Bigr)^2\!\!\!{-}4ax
\Bigl(\!xg_1^2{-}(a{-}x^2)g_1\frac{\partial g_1}{\partial x}{+}g\frac{\partial g_2}{\partial x}{+}
x\Bigl(\!\frac{\partial g_3}{\partial x}\!\Bigr)^2\!\!{+}\frac{\partial V}{\partial x}\!\Bigr)};\!\!\!
\end{aligned} 
\end{equation} 

$2)$ 
the points $(h(x_0),k(x_0))$ for each $x_0\in\Theta^0_{a,g}$, where
\begin{equation*}
\begin{aligned} 
h&(x_0)=\frac{(g{-}k(x_0)x_0)^2}{2(a-x_0^2)}+\frac{k^2(x_0)}{2\beta}-\frac{(a{-}x_0^2)g^2_1}2+g_2g+g_3k(x_0)+V,\\   
k&(x_0)=\frac{g(a+x_0^2)}{2ax_0}-\frac{(a-x_0^2)^2}{2ax_0}\frac{\partial g_3}{\partial x}(a,x_0),
\end{aligned} 
\end{equation*} 
and $g_1,g_2,g_3,V$ in these formulas mean the values of the corresponding functions at the point $(a,x_0)${\rm;}

$3)$
for the orbits $M^4_{a,g}$, where $g\ne a\frac{\partial g_3}{\partial x}(a,0)$, the point $(h_0,k_0)$, where
$$
\begin{aligned} 
h_0&=\frac{g^2}{2a}+\frac{k_0^2}{2\beta}-\frac{ag_1^2(a,0)}2+g_2(a,0)g+g_3(a,0)k_0+V(a,0),\\
k_0&=\frac{ag_1(a,0)\frac{\partial g_1}{\partial x}(a,0)-g\frac{\partial g_2}{\partial x}(a,0)-
\frac{\partial V}{\partial x}(a,0)}{\frac{\partial g_3}{\partial x}(a,0)-\frac ga};
\end{aligned} 
$$

$4)$
for the orbits $M^4_{a,g}$, where $g=a\frac{\partial g_3}{\partial x}(a,0)$ and $a$ satisfies the relation
$$
ag_1(a,0)\frac{\partial g_1}{\partial x}(a,0)-a\frac{\partial g_3}{\partial x}(a,0)\frac{\partial g_2}{\partial x}(a,0)-\frac{\partial V}{\partial x}(a,0)=0,
$$ 
the parabola
$$
h=\frac{k^2}{2\beta}{+}g_3(a,0)k{+}\frac a2\Bigl(\!\frac{\partial g_3}{\partial x}(a,0)\!\Bigr)^2\!\!{-}\frac a2g_1(a,0){+}a\frac{\partial g_3}{\partial x}(a,0)g_2(a,0){+}V(a,0).
$$
\end{theorem}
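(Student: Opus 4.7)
The plan is to start from Theorem \ref{Th:BifDiag0} together with the envelope remark following it, which identify $\Sigma_1$ with the image of the locus cut out by \eqref{Eq:k-quadratic} under the map $(k,x)\mapsto(W_{a,g}(k,x),k)$. The whole theorem therefore reduces to solving \eqref{Eq:k-quadratic} for $k$ as a function of $x\in(-\sqrt a,\sqrt a)$ and substituting into $h=W_{a,g}(k,x)$. The key observation to exploit is that the leading coefficient $\frac{ax}{(a-x^2)^2}$ of this quadratic vanishes at $x=0$, so the analysis naturally splits into the generic case $x\ne0$ and the degenerate case $x=0$; the four items of the statement will correspond, in order, to the two branches for $x\ne0$, the degenerate-discriminant subcase of $x\ne0$, and the two subcases of $x=0$.

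For $x\ne0$ I apply the quadratic formula to \eqref{Eq:k-quadratic}: real roots exist iff $D_{a,g}(x)\ge0$, i.e.\ iff $x\in\Theta_{a,g}$, and the two roots are exactly the $k_\pm(x)$ in \eqref{Eq:BifCurve}. The accompanying $h$-coordinate $h_\pm(x)=W_{a,g}(k_\pm(x),x)$ then gives the pair of curves of part 1) on each non-degenerate component $I\in\cI_{a,g}$. On a degenerate component $\{x_0\}\in\Theta^0_{a,g}$ the discriminant vanishes, the two roots merge into $k(x_0)=-B'_{a,g}(x_0)\cdot\frac{(a-x_0^2)^2}{2ax_0}$, and expanding this using $B'_{a,g}(x_0)=\frac{\partial g_3}{\partial x}(a,x_0)-\frac{g(a+x_0^2)}{(a-x_0^2)^2}$ produces exactly the formula of part 2).

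For $x=0$ equation \eqref{Eq:k-quadratic} degenerates to the linear equation $B'_{a,g}(0)k+C'_{a,g}(0)=0$, and a short direct computation yields
\begin{equation*}
B'_{a,g}(0)=\frac{\partial g_3}{\partial x}(a,0)-\frac{g}{a},\qquad
C'_{a,g}(0)=-ag_1\frac{\partial g_1}{\partial x}+g\frac{\partial g_2}{\partial x}+\frac{\partial V}{\partial x},
\end{equation*}
with all partials evaluated at $(a,0)$. If $g\ne a\frac{\partial g_3}{\partial x}(a,0)$, the unique root $k_0=-C'_{a,g}(0)/B'_{a,g}(0)$ plugged into $W_{a,g}(k_0,0)$ reproduces the point of part 3). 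If $g=a\frac{\partial g_3}{\partial x}(a,0)$, a solution exists iff $C'_{a,g}(0)=0$, which is the additional relation appearing in part 4); in that subcase every $k\in\bbR$ is admissible, so $h=W_{a,g}(k,0)$ traces an entire parabola, and the simplification $\frac{g^2}{2a}=\frac{a}{2}(\frac{\partial g_3}{\partial x})^2$ brings $W_{a,g}(k,0)$ to the form displayed.

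The main obstacle, such as it is, is the bookkeeping around $x=0$: I must verify that the isolated point of part 3) and the parabola of part 4) are genuine components of $\Sigma_1$, not mere limits of the curves $k_\pm(x)$ as $x\to 0$ (generically one root of a perturbed quadratic whose leading coefficient vanishes escapes to $\pm\infty$ and the other tends to $k_0$, so the finite limit is captured at $x=0$ and only there), and that the remaining subcase $g=a\frac{\partial g_3}{\partial x}(a,0)$ with $C'_{a,g}(0)\ne 0$ contributes nothing at $x=0$ and is therefore correctly absent from the statement. All other steps are direct algebraic substitution.
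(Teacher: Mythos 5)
Your proposal is correct and follows essentially the same route as the paper's proof: reduce $\Sigma_1$ to the envelope equations \eqref{Eq:envelope}, solve the quadratic \eqref{Eq:k-quadratic} in $k$ for $x\ne0$ (positive versus vanishing discriminant giving items 1) and 2)), and treat $x=0$ separately as the linear equation $B'_{a,g}(0)k+C'_{a,g}(0)=0$, whose subcases $B'_{a,g}(0)\ne0$ and $B'_{a,g}(0)=C'_{a,g}(0)=0$ yield items 3) and 4). Your extra remarks on the limiting behavior of the roots as $x\to0$ match the paper's observation that the two arcs of \eqref{Eq:BifCurve} glue into one smooth curve through the point $(h_0,k_0)$.
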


\begin{proof} 
All formulas in cases 1)--4) follow from equations \eqref{Eq:envelope} and expression~\eqref{Eq:W}. The cases 1) and 2) correspond to solutions of quadratic 
equation~\eqref{Eq:k-quadratic} for each parameters $x$ from $\Theta_{a,g}$, but in the case 2), when $x\in\Theta^0_{a,g}$, the corresponding 
discriminant $D_{a,g}(x)$ vanishes, since $D_{a,g}$ is a continuous function on $(-\sqrt a,\sqrt a)$. 

The case 3) corresponds to $x=0$ in equation~\eqref{Eq:k-quadratic}. If $B'_{a,g}(0)=\frac{\partial g_3}{\partial x}(a,0)-\frac ga\ne0$, 
then $-C'_{a,g}(0)/B'_{a,g}(0)$ is the unique solution $k_0$ of linear equation~\eqref{Eq:k-quadratic} for $x=0$, and we obtain the  point $(h_0,k_0)$ 
in the case~3). Note that if $B'_{a,g}(0)\ne0$, then the discriminant $D_{a,g}(x)$ is positive on some interval $(-\varepsilon,\varepsilon)$ and 
there are two bifurcational curves {\rm\eqref{Eq:BifCurve}} defined on $(-\varepsilon,0)$ and $(0,\varepsilon)$ which tend to the point $(h_0,k_0)$ 
as $x\to0$ and form one smooth bifurcational curve glued from two curves at this point.

The case 4) also corresponds to $x=0$, but the conditions on $g$ and $a$ in the case 4) are equivalent to the conditions $B'_{a,g}(0)=C'_{a,g}(0)=0$, 
which imply that an arbitrary $k$ is a solution of~\eqref{Eq:k-quadratic} for $x=0$. Thus, we obtain the required parabola in the case 4).
\end{proof} 

Note that for arbitrary functions $g_1,g_2,g_3,V$ the behavior of bifurcational curves described in Theorem~\ref{Th:BifDiag} by explicit formulas 
can be fairly complicated. They can have many cusps, intersect one another or coincide on some their arcs. Some general properties concerning 
the behavior of bifurcational curves are described in the following statement.

\begin{corollary} 
$1)$ 
If $J\subset\Theta_{a,g}$ is an open interval such that $D_{a,g}|_J>0$, then the bifurcational curve $(h_\pm(x),k_\pm(x))$ defined on $J$ 
by formulas {\rm\eqref{Eq:BifCurve}} is a smooth parametric curve 
which is regular for all~$x$, where $\frac{dk_\pm}{dx}(x)\ne0$. 

$2)$
Exactly two arcs of the bifurcational curves described in the items $1)$ and $4)$ of Theorem~{\rm\ref{Th:BifDiag}} tend to infinity 
such that $h(k)\sim\frac{k^2}{2\beta}$ {\rm(}one arc for $k\to+\infty$ and one arc for $k\to-\infty${\rm)}. For the curves defined 
by formulas {\rm\eqref{Eq:BifCurve}} these arcs correspond to $x\to0$. 

$3)$
For each singular point $P_\pm$ of rank~$0$ which is of center-center type {\rm(}by Theorem~{\rm\ref{Th:Rank0Points}} there can be 
$0$, $1$, or $2$ such points{\rm)} there are exactly two arcs of the bifurcational curves described by formulas {\rm\eqref{Eq:BifCurve}}
which tend to the corresponding point~$Z_\pm$ described in Theorem~{\rm\ref{Th:BifDiag0}} as $x\to\pm\sqrt a$.
\end{corollary}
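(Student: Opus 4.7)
The plan is to handle items 1)--3) in order of increasing difficulty, using the parametric formulas \eqref{Eq:BifCurve} and the underlying quadratic equation \eqref{Eq:k-quadratic}.

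Item 1 is immediate. On an open $J\subset\Theta_{a,g}$ with $D_{a,g}|_J>0$, the square root $\sqrt{D_{a,g}(x)}$ is smooth, $g_1,g_2,g_3,V$ and their derivatives are smooth, and both $a-x^2$ and $x$ are nonzero on $J$, so the formulas in \eqref{Eq:BifCurve} define smooth functions $h_\pm(x)$ and $k_\pm(x)$. Wherever $dk_\pm/dx\ne 0$ the $k$-component of the tangent vector $(dh_\pm/dx,dk_\pm/dx)$ is nonzero, so the parametric curve is regular there.

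For item 2, I would analyse the asymptotic behaviour of \eqref{Eq:BifCurve} as $x\to 0^\pm$. In \eqref{Eq:k-quadratic} the coefficient of $k^2$ is $ax/(a-x^2)^2\to 0$ as $x\to 0$, so generically one root stays bounded (and gives the point $(h_0,k_0)$ of item~3) of Theorem~\ref{Th:BifDiag}), while the other blows up like $k\sim(g-a\frac{\partial g_3}{\partial x}(a,0))/(2x)$, tending to $+\infty$ on one side of $x=0$ and to $-\infty$ on the other. Substituting this asymptotic into \eqref{Eq:BifCurve} shows that every term of $h_\pm(x)$ is of order $k$ or bounded except $k_\pm^2/(2\beta)$, which yields $h\sim k^2/(2\beta)$ as $k\to\pm\infty$. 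In the degenerate case of item~4) of Theorem~\ref{Th:BifDiag} the parabola exhibits the same asymptotics directly. Altogether one obtains exactly two arcs: one for each side of $x=0$.

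For item 3, I would substitute $x=\pm\sqrt{a}-s$ and $k=\pm g/\sqrt{a}+u$ into \eqref{Eq:Rank1Points-k} and expand in small $s,u$. The delicate term $(kx-g)(ka-gx)/(a-x^2)^2$ has $0/0$ form at the rank~0 point; direct cancellation shows that its leading part equals $(Au^2-Bs^2)/s^2$ with $A>0$. Combining with the regular terms of \eqref{Eq:Rank1Points-k} evaluated at $(\pm\sqrt{a},\pm g/\sqrt{a})$ and matching with Corollary~\ref{Cor:Rank0Points}, the rank~1 equation reduces at leading order to $u^2=c\,q\,s^2+o(s^2)$ for a manifestly positive constant~$c$, where $q$ is the quantity from Corollary~\ref{Cor:Rank0Points}. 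Hence a center-center point ($q>0$) admits two real branches $u\approx\pm s\sqrt{cq}$, which produce the two arcs of \eqref{Eq:BifCurve} converging to $Z_\pm$ together with $h_\pm\to h(Z_\pm)$; a focus-focus point ($q<0$) admits no real branches, so no arcs approach $Z_\pm$. The other items 2)--4) of Theorem~\ref{Th:BifDiag} produce only isolated points corresponding to $x\in\Theta^0_{a,g}$ or to $x=0$, hence no arcs from them can accumulate at $x=\pm\sqrt{a}$.

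The main obstacle is the algebraic bookkeeping of item 3: the $0/0$ cancellation must produce a coefficient of $s^2$ that is identifiable, up to a definitely positive factor, with the precise $q$ of Corollary~\ref{Cor:Rank0Points}. A sign error or an overlooked term in this expansion would break the equivalence between the existence of two arcs at $Z_\pm$ and the center-center condition $q>0$.
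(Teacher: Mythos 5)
Your proposal is correct and follows essentially the same route as the paper, whose own proof is only a sketch: item 1) from the tangent vector of the parametric curve, and items 2) and 3) by analysing the asymptotics of the explicit parametrization \eqref{Eq:BifCurve} as $x\to0$ and $x\to\pm\sqrt a$, with the sign of $D_{a,g}$ near $\pm\sqrt a$ tied to the sign of $q$ from Corollary~\ref{Cor:Rank0Points} exactly as your local expansion $u^2=c\,q\,s^2+o(s^2)$ does. Two immaterial slips: the unbounded root behaves as $\bigl(g-a\frac{\partial g_3}{\partial x}(a,0)\bigr)/x$ rather than $/(2x)$, and for item 1) the paper additionally invokes the envelope identity $\frac{dh_\pm}{dx}=\frac{\partial W_{a,g}}{\partial k}\frac{dk_\pm}{dx}$, which gives the sharper converse that the curve can fail to be regular only where $\frac{dk_\pm}{dx}=0$; neither affects the claims as stated.
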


\begin{proof} 
Since $h=h_\pm(x)$, $k=k_\pm(x)$ satisfy equations \eqref{Eq:envelope}, we have
$$
\frac{dh_\pm}{dx}(x)=\frac{\partial W_{a,g}}{\partial k}(k_\pm(x),x)\frac{dk_\pm}{dx}(x). 
$$
Therefore, the parametric curve \eqref{Eq:BifCurve} is regular iff $\frac{dk_\pm}{dx}(x)\ne0$ and
can have singularities {\rm(}for example, cusps{\rm)} only at points, where $\frac{dk_\pm}{dx}=0$. 

Items 2) and 3) follow from formulas \eqref{Eq:BifCurve} by investigating the behavior of the parametric curves $(h_\pm(x),k_\pm(x))$ 
as $x$ tends to $0$ or $\pm\sqrt a$. Note that $D_{a,g}$ is positive in a neighborhood of the points $\pm\sqrt a$ iff
$q$ from Corollary \ref{Cor:Rank0Points} is positive for $R_3=\pm\sqrt a$.
\end{proof} 

%%%%%%%%%%%%%%%%%%%%%%%%%%%%%%%%%%%%%%%%%%%%%%%%%%%%%%%%%%%%%%%%%%%%%%%%%%%%%%%%%%%%%%%%%%%%%%%%%%%%%%%%%%%%%%%%%%%%%%%%%%%%%%%%%%%%%%%%%%%
\section{Liouville tori bifurcations}

All basic definitions and facts about Liouville tori bifurcations can be found in \cite{BF}.

\begin{theorem} \label{Th:Rank1Type}
A singular point of rank $1$ {\rm(}described in Theorem {\rm\ref{Th:CriticalCircle}} and Corollary {\rm\ref{Cor:CriticalCircle}}{\rm)}
is non-degenerate iff $\frac{\partial^2 W_{a,g}(k,x)}{\partial x^2}\ne0$, where $W_{a,g}(k,x)$ is given by {\rm\eqref{Eq:W}}. Moreover,
\begin{itemize}
\item if $\frac{\partial^2 W_{a,g}(k,x)}{\partial x^2}>0$, then the type of the point is elliptic{\rm;}
\item if $\frac{\partial^2 W_{a,g}(k,x)}{\partial x^2}<0$, then the type of the point is hyperbolic.
\end{itemize}
\end{theorem}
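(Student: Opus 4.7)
The plan is to reduce by the $S^1$-action of $K=S_3$, observe that a rank-$1$ critical point is non-degenerate precisely when its image in the reduction is a Morse critical point of the reduced Hamiltonian, and then relate the reduced Hessian to $\partial^2 W_{a,g}/\partial x^2$ by a short calculation. The heart of the matter is that $W_{a,g}$ is essentially the restriction of $H$ to the curve in the $(x,m)$-plane along which $\partial H/\partial m$ vanishes.

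First I would work in the coordinates $(x,m,\phi,k)$ from Lemma~\ref{L:coord}. The Poisson tensor computed in the proof of Theorem~\ref{Th:CriticalCircle} is block-diagonal with $\{x,m\}=a-x^2$ and $\{\phi,k\}=1$, and both $H$ (by \eqref{Eq:HamiltIntegrCoord}) and $K=k$ are $\phi$-independent. Since $K$ is itself a coordinate, Marsden--Weinstein reduction by $\sgrad K=\partial/\partial\phi$ amounts to fixing $k=k_0$ and deleting $\phi$; the quotient is the $2$-dimensional symplectic plane with coordinates $(x,m)$, symplectic form $(a-x^2)^{-1}\,dx\wedge dm$, and reduced Hamiltonian $\tilde H(x,m):=H(x,m,k_0)$. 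The equations $\partial H/\partial m=0$ and $\partial H/\partial x=0$ recover exactly the defining relations \eqref{Eq:Rank1Points-m} and \eqref{Eq:Rank1Points-k}, so a rank-$1$ singularity of the original system on $M^4_{a,g}$ projects to a critical point of $\tilde H$ on this plane.

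Next I would apply the standard reduction criterion for rank-$1$ singularities: the original point is non-degenerate iff the reduced critical point of $\tilde H$ is Morse, and it is elliptic if $\det(d^2\tilde H)>0$ and hyperbolic if $\det(d^2\tilde H)<0$. This holds because the linearised reduced Hamiltonian vector field is traceless with determinant $(a-x^2)^2\det(d^2\tilde H)$, so its eigenvalues are $\pm(a-x^2)\sqrt{-\det(d^2\tilde H)}$, purely imaginary in the first case and real in the second.

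Finally I would compute $\det(d^2\tilde H)$ in terms of $W_{a,g}$. The equation $\partial H/\partial m=0$ is linear in $m$ with $H_{mm}=1/(a-x^2)$ and unique solution $m^*(x)=-(a-x^2)g_1$, and a direct check gives $W_{a,g}(k,x)=H(x,m^*(x),k)$. Differentiating twice in $x$ with $k$ fixed, using $H_m=0$ at the critical point and the implicit relation $(m^*)'=-H_{xm}/H_{mm}$ obtained from $H_m(x,m^*(x),k)\equiv0$, produces
\begin{equation*}
\frac{\partial^2 W_{a,g}}{\partial x^2}=H_{xx}-\frac{H_{xm}^2}{H_{mm}}=(a-x^2)\bigl(H_{xx}H_{mm}-H_{xm}^2\bigr)=(a-x^2)\det(d^2\tilde H).
\end{equation*}
Since $a-x^2>0$ on $U$, the sign of $\partial^2 W_{a,g}/\partial x^2$ matches that of $\det(d^2\tilde H)$, and the theorem follows. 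The only real subtlety is the appeal to reduction; an entirely self-contained alternative would be to write the $4\times 4$ linearisation of $\sgrad(H-\lambda K)$ in the block form induced by the Poisson tensor and factor its characteristic polynomial directly as $\mu^2\bigl(\mu^2+(a-x^2)\,\partial^2 W_{a,g}/\partial x^2\bigr)$, avoiding any general reduction result.
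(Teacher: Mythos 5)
Your proposal is correct and takes essentially the same route as the paper: the paper's Lemma~\ref{L:SpecturmAF} likewise reduces everything to the eigenvalues of the $(x,m)$-block of the linearization of $\sgrad(H-\lambda K)$ in the coordinates of Lemma~\ref{L:coord}, obtaining $\mu_\pm^2=H_{xm}^2-H_{xx}H_{mm}$ and then substituting the explicit second derivatives \eqref{Eq:SecondDer2} — exactly the ``self-contained alternative'' you mention at the end, while your symplectic-reduction framing is only a repackaging of that computation. Your Schur-complement identity $\partial^2 W_{a,g}/\partial x^2=H_{xx}-H_{xm}^2/H_{mm}$, derived by implicit differentiation of $H_m(x,m^*(x),k)\equiv0$, is a cleaner way to reach the same relation the paper obtains by direct substitution (and it incidentally makes transparent the harmless factor of $a-x^2$ in the eigenvalue formula, which does not affect the sign and hence the stated type).
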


The non-degeneracy and the type of a singular point $y$ of rank $1$ are completely determined by the spectrum of linearization of the Hamiltonian vector field
which is a (non-trivial) linear combination of $\sgrad H$ and $\sgrad K$ vanishing at $y$. Thus, Theorem~\ref{Th:Rank1Type} follows from the following statement.

\begin{lemma} \label{L:SpecturmAF}
Each point $y$ of rank $1$ {\rm(}described in Theorem {\rm\ref{Th:CriticalCircle}} and Corollary~{\rm\ref{Cor:CriticalCircle}}{\rm)}
is a singular point for the vector field $\sgrad F_y$, where $F_y=H-\lambda K$ and $\lambda=\frac{\partial H}{\partial k}\big|_y$. 
The spectrum of the linearization $A_{F_y}=\mathrm{Lin}(\sgrad F_y)$ at the point $y$ consists of $4$ zeroes and 
$$
\mu_\pm=\pm i\sqrt{\frac{\partial^2 W_{a,g}(k,x)}{\partial x^2}}.
$$
\end{lemma}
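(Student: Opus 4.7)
The plan is to work in the coordinates $(x, m, \phi, k, a, g)$ introduced in Lemma \ref{L:coord}, using the explicit form of the Poisson bracket obtained in the proof of Theorem \ref{Th:CriticalCircle}. To prove the first assertion --- that $y$ is a zero of $\sgrad F_y$ --- I invoke equations \eqref{Eq:lambda}, which characterize rank-1 singular points by $\partial_m H = \partial_x H = 0$ and $\partial_k H = \lambda$. With $F_y = H - \lambda K$ and $K = k$, these three conditions give $\partial_m F_y = \partial_x F_y = \partial_k F_y = 0$ at $y$, while $\partial_\phi F_y = 0$ identically since $\{H, K\} = 0$. Applying formula \eqref{Eq:sgradHandK} to $F_y$ then yields $\sgrad F_y|_y = 0$.

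For the spectrum, I linearize $\sgrad F_y$ at $y$ in these coordinates. The components of $\sgrad F_y$ are $\bigl((a-x^2)\partial_m F_y,\, (x^2-a)\partial_x F_y,\, \partial_k F_y,\, 0,\, 0,\, 0\bigr)$: the last three vanish identically ($k$ is the integral $K$, while $a$ and $g$ are Casimirs), and $F_y$ is independent of $\phi$, so the $6{\times}6$ Jacobian matrix $A_{F_y}$ has zero rows 4, 5, 6 and zero column 3. Cofactor expansion of $\det(tI - A_{F_y})$ successively along these rows and column reduces the calculation to a $2{\times}2$ block of mixed partials in $x$ and $m$, producing
\[
\det(tI - A_{F_y}) = t^4\bigl(t^2 + (a-x^2)^2(F_{mm}F_{xx} - F_{mx}^2)\bigr),
\]
with all derivatives taken at $y$. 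The four zero eigenvalues are immediate.

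It remains to identify the quadratic factor with $\partial^2 W_{a,g}/\partial x^2$. Comparing \eqref{Eq:HamiltIntegrCoord} and \eqref{Eq:W}, one sees that $W_{a,g}(k,x)$ is precisely the restriction of $H$ to the locus $m = m^*(x) := -(a-x^2)g_1(a,x)$, which is the solution of $\partial_m H = 0$. Implicit differentiation of this relation yields $(m^*)'(x) = -H_{mx}/H_{mm}$ at $y$, and hence
\[
\frac{\partial^2 W_{a,g}}{\partial x^2} = H_{xx} - \frac{H_{mx}^2}{H_{mm}}
\]
along this locus. Since $F_y$ and $H$ have identical second partials in $x$ and $m$, and since $H_{mm} = 1/(a-x^2)$ by inspection of \eqref{Eq:HamiltIntegrCoord}, substituting into the quadratic factor above produces the claimed expression for $\mu_\pm$.

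The one slightly delicate point is this final algebraic reduction, where one must balance the factors of $(a-x^2)$ coming from the Poisson structure against those arising from $1/H_{mm}$; everything else is mechanical linear algebra. In particular, the sign of the quadratic factor --- and therefore the elliptic/hyperbolic alternative used in Theorem~\ref{Th:Rank1Type} --- depends only on the sign of $\partial^2 W_{a,g}/\partial x^2$, since the factor $(a-x^2)$ is strictly positive on the domain $x^2 < a$.
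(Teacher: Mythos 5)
Your proposal is correct and follows essentially the same route as the paper: the same coordinates $(x,m,\phi,k,a,g)$, the same use of conditions \eqref{Eq:lambda} together with \eqref{Eq:sgradHandK} to get $\sgrad_yF_y=0$, and the same reduction of the characteristic polynomial to the $2\times2$ block in $(x,m)$. The one place you genuinely diverge is the identification of the quadratic factor with $\partial^2W_{a,g}/\partial x^2$: the paper computes the second derivatives of \eqref{Eq:HamiltIntegrCoord} explicitly (formulas \eqref{Eq:SecondDer}--\eqref{Eq:SecondDer2}) and substitutes them into \eqref{Eq:EigenRank1}, whereas you observe that $W_{a,g}$ is exactly $H$ restricted to the locus $\partial H/\partial m=0$ and invoke the implicit-differentiation identity $W''=H_{xx}-H_{xm}^2/H_{mm}$; your version is shorter and makes the appearance of $W_{a,g}$ conceptually transparent rather than an algebraic coincidence. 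One caveat on the ``delicate point'' you flag: carrying your own characteristic polynomial $t^4\bigl(t^2+(a-x^2)^2(F_{mm}F_{xx}-F_{mx}^2)\bigr)$ through with $H_{mm}=1/(a-x^2)$ yields $\mu_\pm=\pm i\sqrt{(a-x^2)\,\partial^2W_{a,g}/\partial x^2}$, i.e.\ an extra positive factor $(a-x^2)$ relative to the lemma as stated, so the last sentence of your reduction does not literally produce the claimed expression; the paper's own chain of formulas has the analogous normalization slip (taken at face value, \eqref{Eq:EigenRank1} and \eqref{Eq:SecondDer2} give $\pm i\sqrt{W''/(a-x^2)}$). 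Since $a-x^2>0$ on the whole domain, this discrepancy affects neither the non-degeneracy criterion nor the elliptic/hyperbolic dichotomy of Theorem~\ref{Th:Rank1Type}, exactly as you note, so it is a defect of bookkeeping rather than of the argument.
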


\begin{proof} 
The proof is by direct calculation. The Hamiltonian vector fields $\sgrad H$ and $\sgrad K$ in the coordinates $(x,m,\phi,k,a,g)$ from Lemma~\ref{L:coord} 
are given by \eqref{Eq:sgradHandK}, and at a point $y\in\eLie^*$ of rank~1 conditions \eqref{Eq:lambda} are fulfilled. Hence for the function $F_y=H-\lambda K$, 
where $\lambda=\frac{\partial H}{\partial k}\big|_y$, we have $\sgrad_yF_y=0$, and therefore the linearization $A_{F_y}$ of the field 
$$
\sgrad F_y= 
\Bigl((a-x^2)\frac{\partial H}{\partial m},-(a-x^2)\frac{\partial H}{\partial x},\frac{\partial H}{\partial k}-\lambda,0,0,0\Bigr)
$$
at the point $y$ is well-defined. Taking into account conditions \eqref{Eq:lambda}, we get the following equation for the spectrum of $A_{F_y}$:
$$
\det(A_{F_y}-\mu\,\mathrm{Id})=\mu^4(a-x^2)^2\det
\left(\begin{matrix}   
\frac{\partial^2 H}{\partial m \partial x}-\mu&\frac{\partial^2 H}{\partial m^2}\\
-\frac{\partial^2 H}{\partial x^2}&-\frac{\partial^2 H}{\partial x\partial m}-\mu
\end{matrix}\right)=0.
$$
Thus the non-zero eigenvalues of $A_{F_y}$ are
\begin{equation} \label{Eq:EigenRank1}
\mu_\pm=\pm\sqrt{\Bigl(\frac{\partial^2 H}{\partial x\partial m}\Bigr)^2-\frac{\partial^2 H}{\partial x^2}\frac{\partial^2 H}{\partial m^2}}.
\end{equation}

For the function $H$ given by \eqref{Eq:HamiltIntegrCoord}  we have
\begin{equation} \label{Eq:SecondDer}
\begin{gathered}
\frac{\partial^2 H}{\partial m^2}=\frac1{a-x^2},\qquad\frac{\partial^2 H}{\partial x\partial m}=\frac{\partial g_1}{\partial x}(a,x)+\frac{2mx}{(a-x^2)^2},\\
\frac{\partial^2 H}{\partial x^2}=\frac{(g^2+ak^2+m^2)(a+3x^2)-2gkx(x^2+3a)}{(a-x^2)^3}+\\
+m\frac{\partial^2 g_1}{\partial x^2}(a,x)+g\frac{\partial^2g_2}{\partial x^2}(a,x)+k\frac{\partial^2 g_3}{\partial x^2}(a,x)+\frac{\partial^2 V}{\partial x^2}(a,x).\\
\end{gathered}
\end{equation}
Since, by Theorem \ref{Th:CriticalCircle}, at a singular point we have $m=-(a-x^2)g_1(a,x)$, equalities \eqref{Eq:SecondDer} can be rewritten as
\begin{equation} \label{Eq:SecondDer2}
\begin{gathered}
\frac{\partial^2 H}{\partial m^2}=\frac1{a-x^2},\qquad\frac{\partial^2 H}{\partial x\partial m}=\frac{\partial g_1}{\partial x}(a,x)-\frac{2xg_1(a,x)}{a-x^2},\\
\frac{\partial^2 H}{\partial x^2}=\frac{\partial^2 W_{a,g}(k,x)}{\partial x^2}+(a-x^2)\Bigl(\frac{\partial g_1}{\partial x}(a,x)-\frac{2xg_1(a,x)}{a-x^2}\Bigr)^2,\\
\end{gathered}
\end{equation}
where $W_{a,g}(k,x)$ is given by \eqref{Eq:W}.
Substituting expressions \eqref{Eq:SecondDer2} into formula \eqref{Eq:EigenRank1} we get the desired expression for $\mu_\pm$.

Lemma \ref{L:SpecturmAF} and, consequently, Theorem \ref{Th:Rank1Type} are proved.
\end{proof}

\begin{theorem} \label{Th:Atoms}
The only possible non-degenerate Liouville tori bifurcations for the isoenergy surfaces $Q^3$ of the integrable Hamiltonian system with 
Hamiltonian $\eqref{Eq:H}$ and the integral $K=S_3$ on orbit $\eqref{Eq:LevelSurf}$ are the so-called $A$ and $V_k$ bifurcations. 
In particular, if there is only one singular circle in a fiber, then the bifurcation is either $A$ or $B$.
\end{theorem}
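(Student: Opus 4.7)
The plan is to combine Theorem~\ref{Th:Rank1Type} with the classification of non-degenerate Liouville tori bifurcations for integrable systems carrying a global Hamiltonian $S^1$-symmetry, following \cite{BF}. Since $\sgrad K$ generates a periodic flow and every singular circle of rank~$1$ is one of its orbits (Corollary~\ref{Cor:CriticalCircle}), this $S^1$-action is defined on each isoenergy surface $Q^3$ and preserves the Liouville foliation. Moreover, by Theorem~\ref{Th:Rank1Type}, every non-degenerate rank-$1$ point is of elliptic or hyperbolic type.

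First I would dispose of the elliptic case. Near a singular circle with $\partial^2 W_{a,g}/\partial x^2>0$, the Eliasson normal form for a non-degenerate elliptic rank-$1$ circle provides a model neighbourhood in $Q^3$ in which the singular fiber is that circle and nearby fibers are concentric tori — this is precisely the standard 3-atom $A$.

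The hyperbolic case is the substantial part. I would take a saturated neighbourhood $U\subset Q^3$ of the critical fiber and pass to the quotient $U/S^1$. Because $\sgrad K\neq 0$ on every rank-$1$ circle (such a circle \emph{is} an orbit of $\sgrad K$), the $S^1$-action on $U$ is free outside the singular circles, with possible cyclic isotropy $\bbZ_k\subset S^1$ on each singular circle. The quotient $U/S^1$ is therefore a $2$-orbifold equipped with a Morse-type function (the descended $H$), whose underlying $2$-atom is one of the standard hyperbolic Fomenko atoms. Reconstructing $U$ as a Seifert fibration over $U/S^1$ whose exceptional fibers have the multiplicities recorded above identifies $U$ with a $3$-atom of the $V_k$-series from Bolsinov--Fomenko theory (with $V_1=B$ corresponding to a free action on the singular circle). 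This yields the first statement.

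For the \textit{in particular} clause, suppose the critical fiber contains exactly one singular circle. Then the quotient $2$-atom has a single vertex, so it is either the disk (giving the $3$-atom $A$ in the elliptic case) or the simplest saddle $B$; in the hyperbolic case only $V_1=B$ has a single singular circle, since every $V_k$ with $k\geq 2$ arises from a $2$-atom with more than one vertex. Hence the bifurcation is $A$ or $B$. The main obstacle is the precise identification of $U$ with an element of the $V_k$-series — this is essentially a citation of the Bolsinov--Fomenko classification of $3$-atoms with a Hamiltonian $S^1$-action, and the non-degeneracy furnished by Theorem~\ref{Th:Rank1Type} is exactly what guarantees the Morse-type (Seifert) local structure of $U$ near the critical fiber.
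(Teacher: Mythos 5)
Your quotient-and-Seifert strategy is genuinely different from the paper's argument, which is purely computational: in the coordinates $(x,m,\phi,k,a,g)$ of Lemma \ref{L:coord} the fiber over $(h_0,k_0)$ is the set where $\phi$ is arbitrary and $(x,m)$ satisfies $\frac{(m+(a-x^2)g_1)^2}{2(a-x^2)}=h_0-W_{a,g}(k_0,x)$, i.e.\ a level set of a natural one-degree-of-freedom system with potential $W_{a,g}(k_0,\cdot)$; such a level set is a union of circles glued at the saddle points, and the 3-atom is its direct product with the $\phi$-circle, which is exactly $V_k$ (Figure \ref{Fig:AtomVk}). Measured against this, your argument has a real gap at the decisive step: from ``the quotient carries a Morse function whose 2-atom is one of the standard hyperbolic Fomenko atoms'' you conclude that the 3-atom lies in the $V_k$-series, but every hyperbolic 2-atom is a standard Fomenko atom, and an arbitrary 2-atom crossed with $S^1$ is a perfectly good 3-atom with a free Hamiltonian $S^1$-action; the Seifert formalism alone cannot exclude, say, $C_2\times S^1$ or $D_1\times S^1$. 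What singles out the $V_k$-series is the specific form \eqref{Eq:HamiltIntegrCoord} of the reduced Hamiltonian --- quadratic in the momentum $m$ with positive coefficient $\frac1{2(a-x^2)}$ --- which restricts the reduced level sets to chains of circles and nothing else. You never invoke this, and without it the conclusion does not follow.

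A second, related problem is your bookkeeping of isotropy. On $U=\{R_1^2+R_2^2\ne0\}$, which by Lemma \ref{L:R12zero} contains every rank-1 circle, one has $\sgrad K=\partial/\partial\phi$, so the $S^1$-action is free with period $2\pi$ everywhere (it rotates the $(R_1,R_2)$-plane); there are no exceptional fibers and no $\bbZ_k$-isotropy on singular circles. Hence every 3-atom here is a direct product, and the index $k$ in $V_k$ counts the vertices of the reduced 2-atom (equivalently, the critical circles in the fiber), not Seifert multiplicities. Your parenthetical ``$V_1=B$ corresponding to a free action'' and your later claim that ``$V_k$ with $k\ge2$ arises from a 2-atom with more than one vertex'' are mutually inconsistent with the multiplicity reading, which signals that the identification of $U$ with a member of the $V_k$-series --- the step you yourself flag as the main obstacle --- is not actually carried out. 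The elliptic case and the ``in particular'' clause are fine once the direct-product structure and the meaning of $k$ are fixed.
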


\begin{proof} 
There is only one elliptic bifurcation (of type $A$), thus we consider hyperbolic bifurcations.
Since all critical points of rank $1$ satisfy the condition $R_1^2 + R_2^2 \not =0$, we can work in the coordinates $(x,m,\phi,k,a,g)$. 

Consider the inverse image of a point $(h_0,k_0)$ under the momentum mapping $M^4_{a,g}\to\bbR^2(h,k)$. 
Then $\phi$ is arbitrary and $m$ is given by 
\begin{equation} \label{Eq:InvImm}\
\frac{(m+(a-x^2)g_1(a,x))^2}{2(a-x^2)}=h_0-W_{a,g}(k_0,x),
\end{equation} 
where $x$ satisfies the condition $h_0\geq W_{a,g}(k_0,x)$.

Thus any connected component of a singular fiber for a non-degenerate singularity is a product of $S^1$ and a wedge sum of $k$ circles as in Figure~\ref{Fig:AtomVk}. \begin{figure} 
    \centering
    \includegraphics[width=\textwidth]{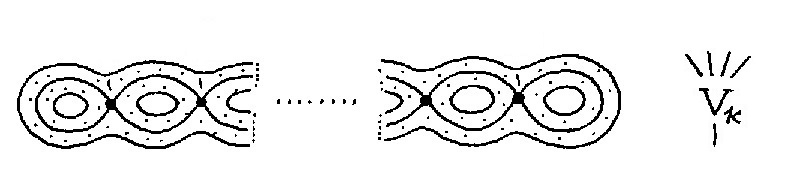}
    \caption{Atom $V_k$.}
    \label{Fig:AtomVk}
\end{figure} More precisely, the set in the plane $(m,x)$ given by equation \eqref{Eq:InvImm} is homeomorphic to the union of circles that are joined at the points $h_0=W_{a,g}(k_0,x)$.

Since the singularity is non-degenerate, this is precisely the bifurcation for the $V_k$ atom. Theorem \ref{Th:Atoms} is proved.
\end{proof}

%%%%%%%%%%%%%%%%%%%%%%%%%%%%%%%%%%%%%%%%%%%%%%%%%%%%%%%%%%%%%%%%%%%%%%%%%%%%%%%%%%%%%%%%%%%%%%%%%%%%%%%%%%%%%%%%%%%%%%%%%%%%%%%%%%%%%%%%%%%
\section{Isoenergy surfaces}

For a Hamiltonian function $H$ on $\eLie^*$ which is a positive definite quadratic form in $\bS$, the topology of isoenergy surfaces is completely 
determined by their projections on the Poisson shere (for details see \cite{BF}). By Theorem \ref{Cor:CriticalCircle}, the projection is invariant under rotation 
around the $R_3$-axis. As a direct consequence we get the following statement.

\begin{theorem} \label{Th:Q3}
Any isoenergy surface $Q^3$ of the integrable Hamiltonian system with Hamiltonian $\eqref{Eq:H}$ and the integral $K=S_3$ on orbit $\eqref{Eq:LevelSurf}$ 
is either $\mathbb{RP}^3$ or a disjoint union of $k$ products $S^1\times S^2$ and not more than two spheres $S^3$. 
\end{theorem}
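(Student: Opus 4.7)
The plan is to follow the strategy indicated in the paragraph preceding the theorem: study $Q^3$ via its projection $\pi\colon M^4_{a,g}\to S^2_a=\{|\bR|^2=a\}$ onto the Poisson sphere, and use the $S^1$-symmetry to control the shape of the ``possible motion region'' $U=\pi(Q^3)$. Since $H$ is positive-definite quadratic in $\bS$ and the fiber of $\pi$ over any $\bR\in S^2_a$ is the affine $2$-plane $\{\bS:\langle\bS,\bR\rangle=g\}$, the intersection $\pi^{-1}(\bR)\cap Q^3$ is either empty, a single point (where the fiberwise minimum of $H$ equals $h$), or a circle. Thus $\pi|_{Q^3}\colon Q^3\to U$ is an $S^1$-bundle over $\operatorname{int}U$ that collapses to a point over $\partial U$.

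Next I would exploit the $S^1$-symmetry. Because the $R_3$-rotation acts on $S^2_a$ and $Q^3$ is invariant, $U$ is a rotationally symmetric closed subset of $S^2_a$, hence a finite union of parallel ``zones'' of the form $\{x_-\leq R_3\leq x_+\}\cap S^2_a$. Each connected component is therefore either the whole sphere $S^2_a$, a polar cap (closed disk bounded by one parallel circle), or an annular zone (cylinder bounded by two parallel circles). In particular at most two polar caps can appear (one at each pole), the number of annular zones is arbitrary, and the option $U=S^2_a$ excludes the others.

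Then I would identify $Q^3$ over each piece. Over a polar cap $V\cong D^2$ the restriction $\pi^{-1}(V)\cap Q^3$ is $D^2\times S^1$ with every circle $\{\theta\}\times S^1$, $\theta\in\partial D^2$, collapsed to a point; presenting this as the union of the solid torus $D^2\times S^1$ and the mapping cylinder of the projection $\partial D^2\times S^1\to\partial D^2$ recovers the standard genus-one Heegaard decomposition of $S^3$, so the piece is $S^3$. Over an annular zone $V\cong S^1\times I$ a parallel argument represents the piece as $S^1\times((I\times S^1)/{\sim})\cong S^1\times S^2$. Over $V=S^2_a$ one gets a genuine $S^1$-bundle over $S^2$, and here I would use the classical identification $M^4_{a,g}\cong T^*S^2$ together with the fact that $Q^3=\partial\{H\leq h\}$ is the corresponding unit sphere bundle $ST^*S^2\cong SO(3)\cong\mathbb{RP}^3$.

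Assembling the components gives the statement. The main obstacle will be the case $V=S^2_a$: pinning down the $S^1$-bundle as $\mathbb{RP}^3=L(2,1)$ rather than some other $L(p,1)$ requires identifying the Euler number of the bundle, which is most easily done by realizing $M^4_{a,g}$ as $T^*S^2$ and using $\chi(S^2)=2$. The polar cap and annular zone cases, by contrast, are routine topological gluings.
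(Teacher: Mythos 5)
Your proposal is correct and follows essentially the same route as the paper: project $Q^3$ to the Poisson sphere, use the rotational symmetry to see that the region of possible motion is a union of polar caps, annular zones, or the whole sphere, and identify the corresponding pieces as $S^3$, $S^1\times S^2$, or $\mathbb{RP}^3$. The only difference is that you supply the fiberwise-convexity and gluing arguments (Heegaard splitting, Euler number of the circle bundle) that the paper delegates to the cited general result in Bolsinov--Fomenko.
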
 

\begin{proof}  
If the projection of $Q^3$ on the Poisson sphere is surjective, then $Q^3=\mathbb{RP}^3$.
Otherwise the image of the projection is the unioun of $l$ rings and not more than two disks with centers in the poles $\bR=(0,0,R_3)$. 
Each ring corresponds to $S^1\times S^2$ and each disk to $S^3$. 
\end{proof}

\paragraph{ACKNOWLEDGMENTS} This work was supported by the Russian  Science Foundation, project no. 17-11-01303.

%%%%%%%%%%%%%%%%%%%%%%%%%%%%%%%%%%%%%%%%%%%%%%%%%%%%%%%%%%%%%%%%%%%%%%%%%%%%%%%%%%%%%%%%%%%%%%%%%%%%%%%%%%%%%%%%%%%%%%%%%%%%%%%%%%%%%%%%%%%

\end{document}